\definecolor{darkblue}{rgb}{0,0,.7}
\newlist{alphenum}{enumerate}{1}
\setlist[alphenum]{fullwidth,label={(\alph*)}}
\theoremstyle{definition}
\newtheorem{theorem}{Theorem}[section]
\newtheorem{lemma}[theorem]{Lemma}
\theoremstyle{remark}
\newtheorem{rem}{\bf Remark}
\numberwithin{figure}{section}
\numberwithin{table}{section}
\numberwithin{equation}{section}
\begin{document}

\title[Higher-order consistent splitting schemes for NSEs]{Stability and error analysis of a new class of  higher-order consistent splitting schemes for the Navier-Stokes equations}

\author[
	F. Huang and J. Shen
	]{
	Fukeng Huang$^\dag$ and Jie Shen$^{\dag,\ddag}$
		}
	\thanks{\noindent $^\dag$School of Mathematical Sciences, Eastern Institute of Technology, Ningbo, Zhejiang 315200, P. R. China (fkhuang@eitech.edu.cn, jshen@eitech.edu.cn). \\
$^\ddag$ Corresponding author. The work of J.S. is partially supported by NSFC grants W2431008 and 12371409.
	}

\begin{abstract}
  A new class of fully decoupled consistent splitting schemes for the Navier-Stokes equations are constructed and analyzed in this paper. The schemes are based on the Taylor expansion at $t^{n+\beta}$ with $\beta\ge 1$ being a free parameter. It is shown that by choosing {\color{black} $\beta= 3, \,6,\,9$} respectively for the second-, third- and fourth-order schemes, their numerical solutions are uniformed bounded in a strong norm, and admit optimal global-in-time convergence rates in both 2D and 3D. {\color{black}These } results are the first stability and convergence results for any fully decoupled, higher than second-order schemes for the Navier-Stokes equations. Numerical results are provided to show that the third- and fourth-order schemes based on the usual BDF (i.e. $\beta=1$) are not unconditionally stable while the new third- and fourth-order schemes with suitable $\beta$ are unconditionally stable and lead to expected convergence rates.
\end{abstract}

\keywords{Navier-stokes; stability; error analysis; consistent splitting schemes; higher-order}
 \subjclass[2000]{65M12; 76D05; 65M15}

\maketitle

\section{Introduction}
We consider in this paper the construction and error analysis of a new class of high order consistent splitting schemes for the incompressible Navier-Stokes equations:
\begin{subequations}\label{NS}
\begin{align}
& \frac{\partial \bm u}{\partial  t}+\bm u \cdot {\nabla}  \bm u-\nu \Delta \bm u+\nabla p=\bm f,\label{NS1}\\
& \nabla \cdot \bm u=0,
\end{align}
\end{subequations}
with suitable initial conditions in a bounded domain  $\Omega\subset \mathbb{R}^d\;(d=2,3)$ and no-slip boundary condition $\bm u=0$ on $\partial \Omega$, and $\bm f$ is an external force.

The Navier-Stokes equations play an important role in many fields of science and engineering.
Due to its  importance  in applications, there is an enormous amount of work devoted to the numerical approximation of the Navier-Stokes equations.  These numerical methods  can be roughly classified into two categories: coupled approach with a mixed formulation (cf. \cite{brezzi2012mixed,elman2014finite,girault1979finite} and the references therein),  and decoupled approach through a  projection type method (including the pressure-correction  and the velocity correction methods) \cite{weinan1995projection,guermond2012convergence, guermond2011error,guermond2003velocity,guermond2004error, karniadakis1991high,orszag1986boundary,prohl1997projection,shen1992error,Shen2012Modeling,timmermans1996approximate,wang2000convergence}, and the consistent splitting method \cite{Shen03JCP,johnston2004accurate,shen2007error,WHSJCP22} (see also the gauge method \cite{E2003Gauge,Nochetto2003Gauge}).  We refer to \cite{guermond2006overview} for a review on the decoupled approach, and would like to point out that the  projection type schemes suffer from a splitting error which prevents them  from achieving full order accuracy in strong norms,
while the consistent splitting schemes do not loose accuracy. However, it has been a long standing open question on how to construct unconditionally stable second- or higher-order decoupled scheme with a rigorous stability and error analysis.

In a recent work \cite{HS2023}, we constructed a new second-order consistent splitting scheme, based on the Taylor expansions at time $t^{n+\beta}$, which, in the absence of nonlinear term,  reads as follows:
\begin{eqnarray}\label{BDF2NS1}
&\frac{(2\beta+1)\bm u^{n+1}-4\beta \bm u^n+(2\beta-1)\bm u^{n-1}}{2\delta t}-\nu{\color{black}\Delta}(\beta\bm u^{n+1}-(\beta-1)\bm u^n)+\nabla((\beta+1)p^n-\beta p^{n-1})={\color{black} f^{n+\beta} },\\
& (\nabla p^{n+1},\nabla q)={\color{black}(f^{n+1}, \nabla q)}-\nu(\nabla\times\nabla\times \bm u^{n+1},\nabla q),\quad\forall q\in H^1(\Omega),
\label{BDF2NS2}
\end{eqnarray}
where we use the identity $\Delta \bm u=\nabla \nabla \cdot \bm u-\nabla \times \nabla \times \bm u$ in \eqref{BDF2NS2}. Note that by integration by parts,  we can  express the volume integral in  the last equation as a boundary integral
\begin{equation*}
(\nabla\times\nabla\times \bm u^{n+1},\nabla q)= \int_{\partial\Omega} \bm n \times\nabla\times \bm u^{n+1}\cdot \nabla q,
\end{equation*}
{\color{black}  which makes it possible to  implement with $C^0$ finite-element methods.}
We were able to prove  that the above scheme with $\beta=5$ is unconditionally  stable in a strong norm, which was the  first such result  for any fully decoupled second- or higher-order scheme for the time dependent Stokes problem.  Then, by employing the generalized scalar auxiliary variable (GSAV) approach \cite{HS22} to handle the nonlinear term, we also conducted a rigorous stability  and error analysis for a corresponding second-order consistent splitting scheme for the Navier-Stokes equations.

While one can construct formally higher-order consistent splitting schemes based on the Taylor expansions at time $t^{n+\beta}$, it is an open question on how to prove its unconditional stability with a suitable $\beta$ for third- and higher-order schemes. 
 The main purpose of this paper is to provide an affirmative answer to this open question. More precisely,  our main contributions include:
 \begin{itemize}
 \item We  improve the results in \cite{HS2023} by showing that the second order consistent scheme based on the Taylor expansion at time $t^{n+3}$, instead of $t^{n+5}$, is   unconditionally stable in $l^2(H^2)\cap l^\infty(H^1)$. Note that as $\beta$ increases, so does the truncation error. Therefore, it is beneficial to use smaller $\beta$ when possible.
 \item We show that the third-order (resp.  fourth-order) consistent splitting schemes based on the Taylor expansion at time $t^{n+6}$ (resp. $t^{n+9}$) is unconditionally stable in $l^2(H^2)\cap l^\infty(H^1)$, and also carry out a rigorous error analysis with   global-in-time optimal error estimates both in 2D and 3D for the new second- to fourth-order consistent splitting schemes. Note that in \cite{HS2023} only local-in-time error estimate was established in 3D for a second-order consistent splitting scheme with $\beta=5$.
 \end{itemize}
 To the best of our knowledge, these schemes are the first higher than second-order  fully decoupled schemes for the  Navier-Stokes equations with a rigorous stability and error analysis. 

 We emphasize that the analysis in  \cite{HS2023} for the second-order scheme cannot be easily extended to third- or higher-order schemes. A main difficulty is that stability in the higher-order cases cannot be derived with usual  test functions.
 We recall that the stability of the usual higher-order BDF schemes for parabolic type equations relies on a result by Nevanlinna and Odeh \cite{nevanlinna1981}   (see also \cite{akrivis2021energy} for the extension to the six-order BDF scheme) in which the existence of suitable multipliers that can lead to energy stability was established. Most recently in \cite{HS2024}, we  extended the Nevanlinna and Odeh Lemma to the generalized  higher-order  (up to order four) BDF schemes for  parabolic type equations and carried out a rigorous error analysis. The technique used  in  \cite{HS2024} to identify suitable multipliers, as well as the Lemma on the Stokes commutator in \cite{liuCPAM}, are the two essential tools in proving the unconditional stability of the new schemes proposed in this paper. However,
unlike the parabolic type equations considered in   \cite{HS2024},  there is another  essential difficulty to control the explicit treatment of the pressure in the consistent splitting schemes. In fact,
 the multipliers identified in  \cite{HS2024} for parabolic type equations cannot be directly used here. A key and nontrivial step is to split the viscous term into suitable forms (see \eqref{splitB}) such that the explicit pressure terms can be controlled. 

The rest of the paper is organized as follows. In the next section,  we provide some preliminaries to be used in the sequel. In Section 3, we construct a new consistent splitting scheme for the time dependent Stokes equations and prove its unconditional stability in a strong norm. Then, in Section 4, we present the new high order consistent splitting scheme for the Navier-Stokes equations with explicit treatment for the non-linear terms and present detailed error analysis. In the final section, we provide a numerical example to validate the accuracy of our scheme, and conclude with a few  remarks.
\section{Preliminaries}
We first  introduce some notations. Let $W$ be a Banach space, we shall also use the standard notations
  $L^p(0,T;W)$ and $C([0,T];W)$. To simplify the notation, we often omit the spatial dependence  for the exact solution $u$, i.e.,  $u(x,t)$ is often denoted by $u(t)$.  We shall use bold faced letters to denote vectors and vector spaces, and use $C$ to denote a generic positive constant independent of the discretization parameters.  We denote by $(\cdot, \cdot)$ and $\|\cdot\|_0$ the inner product and the norm in $L^2(\Omega)$, {\color{black} and $\|\cdot\|_1$, $\|\cdot\|_2$, the norm in $H^1(\Omega)$, $H^2(\Omega)$ respectively, and denote
  \begin{equation*}
  \mathbb{V} = \left\{ \mathbf{v} \in \mathbf{H}^1_0(\Omega) \;:\; \nabla \cdot \mathbf{v} = 0 \right\}.
  \end{equation*}}

Next, we define the trilinear form $b(\cdot,\cdot,\cdot)$ by
\begin{equation*}
b(\bm u, \bm v, \bm w)= \int_{\Omega}(\bm u \cdot \nabla) \bm v \cdot \bm w d\bm x,
\end{equation*}
Using  H\"older inequality and Sobolev inequality, we have \cite{Tema83}
\begin{equation}\label{eq:ineq2d}
b(\bm u,\bm v,\bm w) \le c\|\bm u\|_1\| \bm v\|_1^{1/2}\|\bm v\|_2^{1/2}\|\bm w\|,  \quad  d=2,3.
 \end{equation}
We also use frequently the following inequalities ({\color{black} see, for instance, Lemma 2.1 in} \cite{Tema83}):
\begin{equation}\label{eq:ineq2}
b(\bm u,\bm v,\bm w)\le
\left\{
\begin{array}{lr}
  c\|\bm u\|_1\|\bm v\|_1\|\bm w\|_1;\\
  c\|\bm u\|_2\|\bm v\|_0\|\bm w\|_1;\\
 c\|\bm u\|_2\|\bm v\|_1\|\bm w\|_0;\\
 c\|\bm u\|_1\|\bm v\|_2\|\bm w\|_0;\\
c\|\bm u\|_0\|\bm v\|_2\|\bm w\|_1;
\end{array}
\right. \quad d\le {\color{black}3}.
\end{equation}
{\color{black}Note that the above inequalities, except the third one, are also valid when $d=4$.}

We will frequently use the following discrete versions of the Gronwall lemma.
\begin{lemma}\label{Gron2}
\textbf{(Discrete Gronwall Lemma) }{\color{black}(see, for instance, lemma 5.4 in \cite{he1998NM})} Let $a_n,\,b_n,\,c_n,$ and $d_n$ be four nonnegative sequences satisfying \begin{equation*}
a_m+\tau \sum_{n=1}^{m} b_n \le \tau \sum_{n=0}^{m-1}a_n d_n +\tau \sum_{n=0}^{m-1} c_n+ C, \,m \ge 1,
\end{equation*}
where $C$ and $\tau$ are two positive constants.
Then
\begin{equation*}
a_m+\tau \sum_{n=1}^{m} b_n \le \exp\big(\tau \sum_{n=0}^{m-1} d_n \big)\big(\tau \sum_{n=0}^{m-1}c_n+{\color{black} \tilde{C}} \big),\,m \ge 1,
\end{equation*}{\color{black} where $\tilde{C}$ is a constant that depends on the initial data $a_0$, $b_0$, $c_0$, and the constant $C$.}
\end{lemma}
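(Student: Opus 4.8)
The plan is to reduce the stated inequality to the classical scalar discrete Gronwall recursion and then run a direct induction. First I would discard the nonnegative term $\tau\sum_{n=1}^m b_n$ from the left-hand side, which is legitimate since $b_n\ge 0$, and work with the simpler inequality $a_m \le \tau\sum_{n=0}^{m-1} d_n a_n + \tau\sum_{n=0}^{m-1}c_n + C$ for $m\ge 1$. The crucial structural observation is that the sum on the right runs only up to $n=m-1$, so $a_m$ never appears on its own right-hand side; this is exactly what makes the estimate hold for all $\tau>0$ without any smallness condition on $\tau$ (in particular, no factor of the form $(1-\tau d_m)^{-1}$ is ever needed).

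Second, since the hypothesis is only assumed for $m\ge 1$ while $a_0$ is free initial data, I would peel off the $n=0$ term and absorb it into the forcing. Setting $B_m := \tau\sum_{n=0}^{m-1}c_n + C + \tau d_0 a_0$, which is nondecreasing in $m$, the inequality becomes $a_m \le \tau\sum_{n=1}^{m-1} d_n a_n + B_m$ for $m\ge 1$, where for $m=1$ the sum is empty and the bound $a_1\le B_1$ is already contained in the hypothesis. This is the clean form: the running sum now starts at the same index $1$ as the base case, so no separate initialisation is required.

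Third, I would prove by induction on $m$ that $a_m \le B_m\prod_{n=1}^{m-1}(1+\tau d_n)$. The base case $m=1$ is immediate. For the inductive step I insert the inductive hypotheses $a_n\le B_n\prod_{j=1}^{n-1}(1+\tau d_j)$ into the recursion and use the monotonicity $B_n\le B_m$ together with the telescoping identity $\sum_{n=1}^{m-1}\tau d_n\prod_{j=1}^{n-1}(1+\tau d_j) + 1 = \prod_{n=1}^{m-1}(1+\tau d_n)$, which follows at once from $\prod_{j=1}^{n}(1+\tau d_j) - \prod_{j=1}^{n-1}(1+\tau d_j) = \tau d_n\prod_{j=1}^{n-1}(1+\tau d_j)$. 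Bounding each factor by $1+\tau d_n\le \exp(\tau d_n)$ then yields $a_m \le B_m\exp\!\big(\tau\sum_{n=0}^{m-1}d_n\big)$.

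Finally, to recover the full left-hand side I would reinstate $\tau\sum_{k=1}^m b_k$: keeping it on the left and substituting the just-proved bound for $a_n$ ($n\ge 1$) back into the right of the original inequality, the extra $\tau d_0 a_0$ built into $B_m$ cancels exactly against the separated $n=0$ term, giving $a_m+\tau\sum_{n=1}^m b_n \le \exp\!\big(\tau\sum_{n=0}^{m-1}d_n\big)\big(\tau\sum_{n=0}^{m-1}c_n + \tilde C\big)$ with $\tilde C = C + \tau d_0 a_0$, a constant depending only on the data and $C$ as claimed. The argument is entirely elementary and I do not anticipate a genuine obstacle; the only point requiring care is the absorption of the free initial value $a_0$ into $\tilde C$, forced by the fact that the hypothesis is assumed only for $m\ge 1$.
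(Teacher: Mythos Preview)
Your argument is correct. The induction via the telescoping identity $\sum_{n=1}^{m-1}\tau d_n\prod_{j=1}^{n-1}(1+\tau d_j)=\prod_{n=1}^{m-1}(1+\tau d_n)-1$ is the standard route, and your handling of the free initial value $a_0$ by absorbing $\tau d_0 a_0$ into $\tilde C$ is exactly what is needed given that the hypothesis is stated only for $m\ge 1$. One small wording quibble: the $\tau d_0 a_0$ does not really ``cancel'' in your final step; rather, the separated $n=0$ term on the right of the original inequality recombines with $\tau\sum c_n + C$ to reproduce $B_m$, which is what you actually compute and which yields $\tilde C = C+\tau d_0 a_0$ as you state.

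As for comparison: the paper does not give its own proof of this lemma at all---it is quoted as a known result with a reference to He (1998). So there is nothing to compare against; your self-contained argument simply fills in what the paper cites.
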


In order to establish an unconditional stability result for \eqref{BDF2NS1}-\eqref{BDF2NS2}, we need the following result about the Stokes pressure introduced in \cite{liuCPAM}. For any $\bm u \in H^2(\Omega, \mathbb{R}^N)$, the Stokes pressure $p_s=p_s(\bm u)$ is defined as
\begin{equation}\label{stokespredef}
\nabla p_s(\bm u)=(\Delta \mathcal{P}-\mathcal{P} \Delta) \bm u,
\end{equation}
 where $\mathcal{P}$ is the Leray-Helmholtz projection operator onto divergence-free fields with zero normal component, providing the Helmholtz decomposition $\bm u =\mathcal{P} \bm u+\nabla \phi$, where
\begin{equation}
\big( \mathcal{P}\bm u, \nabla q \big)=\big(\bm u-\nabla \phi, \nabla q \big)=0, \quad \forall q \in H^1(\Omega).
\end{equation}
Then it is proved in \cite{liuCPAM} that
\begin{lemma}\label{stokespre}
Let $\Omega \subset \mathbb{R}^N (N\ge 2)$ be a connected bounded domain with $C^3$ boundary. Then for any $\varepsilon>0$, there exists $C \ge 0$ such that for all vector fields $\bm u \in H^2 \cap H_0^1(\Omega, \mathbb{R}^N)$,
\begin{equation}\label{stokespre2}
\int_{\Omega}|(\Delta \mathcal{P}-\mathcal{P}\Delta) \bm u|^2 \le \big(\frac{1}{2}+\varepsilon \big)\int_{\Omega}|\Delta \bm u|^2+C\int_{\Omega}|\nabla \bm u|^2.
\end{equation}
\end{lemma}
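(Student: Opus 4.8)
The plan is to exploit two structural facts about the Stokes pressure: that $p_s$ is harmonic, and that its Neumann trace on $\partial\Omega$ is an explicit second-order boundary expression in $\bm w:=\mathcal{P}\bm u$; the estimate then follows from a Green identity on $\partial\Omega$ combined with a \emph{sharp} trace inequality, whose optimal constant $\tfrac12$ is read off from the flat half-space model. To begin, since $\mathcal{P}$ annihilates gradients and has divergence-free range, writing $\bm u=\bm w+\nabla\phi$ with $\bm w=\mathcal{P}\bm u$ gives $\mathcal{P}\Delta\bm u=\mathcal{P}\Delta\bm w$, so that $\nabla p_s=(\Delta\mathcal{P}-\mathcal{P}\Delta)\bm u=(\mathrm{Id}-\mathcal{P})\Delta\bm w$ depends only on $\bm w$; in particular $\nabla p_s$ is precisely the gradient part in the Helmholtz decomposition $\Delta\bm w=\mathcal{P}\Delta\bm w+\nabla p_s$. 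Taking the divergence and using $\nabla\cdot\bm w=0$ shows $\Delta p_s=0$, and orthogonality of the decomposition yields both $\|\nabla p_s\|_0^2=(\nabla p_s,\Delta\bm w)$ and the Green representation $\|\nabla p_s\|_0^2=\int_{\partial\Omega}p_s\,\partial_n p_s\,dS$.

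The crux is to compute $\partial_n p_s$. Because $\bm u\in H^2\cap H^1_0$, on $\partial\Omega$ one has $\bm w\cdot\bm n=0$ and $\bm w_{\mathrm{tan}}=-(\nabla\phi)_{\mathrm{tan}}$, while $(\mathcal{P}\Delta\bm w)\cdot\bm n=0$ by the defining property of $\mathcal{P}$. Taking the normal component of $\nabla p_s=\Delta\bm w-\mathcal{P}\Delta\bm w$ therefore gives $\partial_n p_s=(\Delta\bm w)\cdot\bm n$ on $\partial\Omega$, and differentiating the constraints $\bm w\cdot\bm n=0$ and $\nabla\cdot\bm w=0$ tangentially reduces this, on a flat boundary, to the second-order tangential expression $\partial_n p_s=-\operatorname{div}_{\partial\Omega}\!\big(\partial_n\bm w_{\mathrm{tan}}\big)$; on a curved boundary the same computation in boundary-normal coordinates produces additional terms involving the second fundamental form multiplied by first derivatives of $\bm w$, which is exactly where the $C^3$ regularity of $\partial\Omega$ enters and which are of lower order. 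Substituting into the Green representation and integrating by parts on the closed surface $\partial\Omega$ gives
\[
\|\nabla p_s\|_0^2=\int_{\partial\Omega}\nabla_{\partial\Omega}p_s\cdot\partial_n\bm w_{\mathrm{tan}}\,dS+(\text{lower order}).
\]

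The sharp factor $\tfrac12$ would then be obtained from the model problem on the half-space $\{x_N>0\}$: after a tangential Fourier transform, a decaying harmonic function with Neumann data $g$ satisfies $\widehat{p_s}(\xi,x_N)=|\xi|^{-1}\widehat g(\xi)\,e^{-|\xi|x_N}$, and feeding the explicit form of $g$ from the previous step into the boundary form above one checks directly that it is controlled by $(\tfrac12+\varepsilon)\int_{\Omega}|\Delta\bm u|^2$ modulo the divergence-free component $\mathcal{P}\Delta\bm u$; the full Dirichlet condition $\bm u|_{\partial\Omega}=0$ (not merely $\bm w\cdot\bm n=0$) is what makes the constant come out as $\tfrac12$ rather than something larger. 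Freezing coefficients chart by chart against a partition of unity subordinate to the boundary charts, and using the trace theorem together with Young's inequality, the curvature corrections and the interior remainder are absorbed into $\varepsilon\int_{\Omega}|\Delta\bm u|^2+C(\varepsilon)\int_{\Omega}|\nabla\bm u|^2$, which is the asserted bound.

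The main obstacle is precisely the sharpness of the constant: a direct application of the trace theorem to $\|\nabla_{\partial\Omega}p_s\|_{L^2(\partial\Omega)}$ and $\|\partial_n\bm w_{\mathrm{tan}}\|_{L^2(\partial\Omega)}$ would yield only \emph{some} constant, whereas the value $\tfrac12$, which is what ultimately governs the unconditional stability of the splitting schemes, demands the exact Dirichlet-to-Neumann computation on the model domain together with a careful verification that the boundary-curvature commutators are genuinely lower order in the curvilinear estimates.
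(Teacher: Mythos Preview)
The paper does not actually prove this lemma: it is quoted verbatim as a result of Liu, Liu and Pego \cite{liuCPAM}, introduced by the sentence ``Then it is proved in \cite{liuCPAM} that\ldots'' with no accompanying argument. So there is no in-paper proof to compare against; the authors simply invoke the cited result as a black box.

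That said, your sketch is broadly faithful to the strategy of the original Liu--Liu--Pego proof: harmonicity of $p_s$, reduction to a boundary pairing via the Neumann trace $\partial_n p_s=\bm n\cdot\Delta\bm w$, an explicit half-space computation in tangential Fourier variables to extract the sharp constant $\tfrac12$, and then localization to a curved $C^3$ boundary by a partition of unity with curvature corrections absorbed as lower-order terms. The main places where your outline remains genuinely incomplete are (i) the precise half-space identity that actually delivers $\tfrac12$ rather than ``some constant'' --- you assert it but do not display the computation, and this is the heart of the matter --- and (ii) the control of the commutators arising from straightening the boundary, where one must check that the $C^3$ regularity suffices and that the remainder is truly bounded by $\varepsilon\|\Delta\bm u\|_0^2+C\|\nabla\bm u\|_0^2$. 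These are exactly the technical steps that make the original proof nontrivial; your proposal correctly identifies them as the obstacles but does not resolve them. For the purposes of this paper, citing \cite{liuCPAM} is the intended route.
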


In order to make use of the energy techniques to conduct stability and error analysis, we need to find suitable multipliers with the help of following lemma from Dahlquist's G-stability theory \cite{dahlquist1978g}.
\begin{lemma}\label{Gstability}
Let $\alpha(\zeta)=\alpha_q\zeta^q+...+\alpha_0$ and $\mu(\zeta)=\mu_q\zeta^q+...+\mu_0$ be polynomials of degree at most $q$ (and at least one of them of degree $q$) that have no common divisors. Let $(\cdot, \cdot)$ be an inner product with associated norm $|\cdot|$. If
\begin{equation}
\text{Re}\frac{\alpha(\zeta)}{\mu(\zeta)}>0 \quad \text{for} \, |\zeta|>1,
\end{equation}
then there exists a symmetric positive definite matrix $G=(g_{ij}) \in \mathbb{R}^{q\times q}$ and real $\delta_0,...,\delta_q$ such that for $\upsilon^0,...,\upsilon^q$ in the inner product space,
\begin{equation}
\big(\sum_{i=0}^q \alpha_i \upsilon^i, \sum_{j=0}^q \mu_j \upsilon^j \big)=\sum_{i,j=1}^{q}g_{ij}(\upsilon^i,\upsilon^j)-\sum_{i,j=1}^{q}g_{ij}(\upsilon^{i-1},\upsilon^{j-1})+\big |\sum_{i=0}^q\delta_i \upsilon^i \big|^2.
\end{equation}

\end{lemma}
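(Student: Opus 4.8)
The plan is to convert the claimed identity, which is required to hold in an arbitrary inner product space, into a finite-dimensional statement about real symmetric matrices, and then to construct $G$ and the reals $\delta_i$ by a spectral factorization driven by the positivity hypothesis $\mathrm{Re}\,\alpha(\zeta)/\mu(\zeta)>0$ on $|\zeta|>1$.

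First I would note that every term on both sides has the form $\sum_{i,j}c_{ij}(\upsilon^i,\upsilon^j)$ with real $c_{ij}$, and since $(\cdot,\cdot)$ is symmetric we may replace each coefficient matrix by its symmetrization. Choosing $\upsilon^0,\dots,\upsilon^q$ in a space of dimension $q+1$ so that their Gram matrix $\Gamma=\big((\upsilon^i,\upsilon^j)\big)$ realizes a prescribed symmetric positive semidefinite matrix, the desired identity becomes $\langle C,\Gamma\rangle=0$ for all such $\Gamma$, where $C$ is the difference of the two symmetric coefficient matrices; since the positive semidefinite cone spans the space of symmetric matrices, this is equivalent to $C=0$. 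Hence it suffices to produce a symmetric positive definite $G=(g_{ij})\in\mathbb{R}^{q\times q}$ and reals $\delta_0,\dots,\delta_q$ so that, as symmetric bilinear forms in real variables $x_0,\dots,x_q$,
\[
\Big(\sum_{i=0}^q\alpha_i x_i\Big)\Big(\sum_{j=0}^q\mu_j x_j\Big)=\sum_{i,j=1}^q g_{ij}x_ix_j-\sum_{i,j=1}^q g_{ij}x_{i-1}x_{j-1}+\Big(\sum_{i=0}^q\delta_i x_i\Big)^2 .
\]

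To find the $\delta_i$ I would exploit the telescoping nature of the $G$-terms: summing this identity over all shifts of a finitely supported real sequence $(x_n)$ cancels them and leaves $\sum_n\big(\sum_i\alpha_i x_{n+i}\big)\big(\sum_j\mu_j x_{n+j}\big)=\sum_n\big(\sum_i\delta_i x_{n+i}\big)^2$. By Parseval on the unit circle the left side equals $\tfrac{1}{2\pi}\int_{-\pi}^{\pi}\mathrm{Re}\,\big(\alpha(e^{i\theta})\overline{\mu(e^{i\theta})}\big)\,|\widehat{x}(\theta)|^2\,d\theta$, so a necessary condition is $\mathrm{Re}\,\alpha(e^{i\theta})\overline{\mu(e^{i\theta})}\ge0$, which indeed follows from the hypothesis by continuity since $\mathrm{Re}\,\alpha\overline{\mu}=|\mu|^2\,\mathrm{Re}(\alpha/\mu)\ge0$ on $|\zeta|=1$. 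The trigonometric polynomial $\theta\mapsto\mathrm{Re}\,\alpha(e^{i\theta})\overline{\mu(e^{i\theta})}$ is nonnegative and even in $\theta$ (because $\alpha,\mu$ have real coefficients), so the Fej\'er--Riesz theorem provides a real polynomial $\delta(\zeta)=\sum_i\delta_i\zeta^i$ with $|\delta(e^{i\theta})|^2$ equal to it, which determines the $\delta_i$.

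Finally I would recover $G$ from the residual symmetric form $A-\delta\delta^{\!\top}$, where $A$ is the symmetrization of $(\alpha_i\mu_j)$: by construction this residual annihilates the telescoping sum over all shifts, which forces it to be of shift-difference type $g_{ij}-g_{i+1,j+1}$ (with $g$ extended by zero outside $\{1,\dots,q\}$), and this determines a unique symmetric $G$. The hard part will be proving that $G$ is positive definite rather than merely semidefinite: this is precisely the discrete positive-real (Kalman--Yakubovich--Popov) content of Dahlquist's theorem, and it is exactly here that the strict inequality on $|\zeta|>1$, together with the algebraic hypotheses that $\alpha,\mu$ have no common divisor and at least one has degree $q$, become indispensable, since they prevent the associated quadratic storage functional from degenerating on a nonzero window $(x_0,\dots,x_{q-1})$. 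I would establish definiteness by identifying $G$ with the Lyapunov/Gram matrix of the recursion attached to $\mu$ and invoking the strict positivity of $\mathrm{Re}(\alpha/\mu)$; alternatively, as this is Dahlquist's classical G-stability theorem, one may simply cite his original construction, the factorization route above serving only to make the existence of $G$ and the $\delta_i$ transparent.
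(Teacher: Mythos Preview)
The paper does not give its own proof of this lemma; it is stated as a preliminary and attributed to Dahlquist's G-stability theory with a citation to \cite{dahlquist1978g}. Your sketch, by contrast, outlines an actual argument, and it is essentially the classical route to Dahlquist's theorem: reduce the inner-product identity to an identity of symmetric $(q{+}1)\times(q{+}1)$ matrices, extract the $\delta_i$ via a Fej\'er--Riesz factorization of the nonnegative trigonometric polynomial $\theta\mapsto\mathrm{Re}\big(\alpha(e^{i\theta})\overline{\mu(e^{i\theta})}\big)$, read off $G$ from the telescoping structure of the residual, and finally invoke the discrete positive-real (KYP) lemma together with coprimeness and the degree condition to upgrade $G$ from semidefinite to definite. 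This is correct as a proof plan and matches what Dahlquist did; the only caveat is that the last step---strict positive definiteness of $G$---is genuinely the substantive part and your sketch defers it either to the KYP machinery or to a direct citation, which is fine but should be flagged as the nontrivial ingredient rather than a routine verification.
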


\section{Higher-order  consistent splitting scheme for the time dependent Stokes equations}
We shall first present   generalized  BDF  consistent splitting schemes based on the Taylor expansion at time $t^{n+\beta}$, and then  show that the $k$-th ($k=2,3,4$) order with suitable $\beta$s are unconditionally stable in the strong norm.
\subsection{The generalized BDF  schemes}
We note that we constructed  in \cite{HS2024} generalized  BDF schemes  based on the Taylor expansion at time $t^{n+\beta}$ for general parabolic type equations.   Following  \cite{HS2024}, we can construct  generalized  BDF  consistent splitting schemes as follows.
Given an integer $k\ge 2$, denote $t^n= n\delta t$, it follows from the Taylor expansion at time $t^{n+\beta}$ that
\begin{equation}\label{Taylor}
\phi(t^{n+1-i})=\sum_{m=0}^{k-1}[(1-i-\beta)\delta t]^m \frac{\phi^{(m)}(t^{n+\beta})}{m!}+\mathcal{O}(\delta t^{k}),\quad \forall i \ge 0.
\end{equation}
Therefore we have
\begin{equation}\label{TaylorA}
\frac{1}{\delta t}\sum_{q=0}^{k}a_{k,q}(\beta)\phi(t^{n+1-k+q})=\partial_t\phi(t^{n+\beta})+\mathcal{O}(\delta t^k),
\end{equation}
with $a_{k,q}(\beta)$ can be obtained by solving the linear system:
\begin{equation}\label{solve_akq}
{\left[
\begin{array}{ccccc}
1&1&...&...&1\\
\beta-1&\beta&...&...&\beta+k-1\\
(\beta-1)^2&\beta^2&...&...&(\beta+k-1)^2\\
\vdots&\vdots&\vdots&\vdots&\vdots\\
(\beta-1)^k&\beta^k&...&...&(\beta+k-1)^k
\end{array}
\right]}{\left[
\begin{array}{c}
a_{k,k}(\beta)\\a_{k,k-1}(\beta)\\a_{k,k-2}(\beta)\\ \vdots \\a_{k,0}(\beta)
\end{array}
\right]}=
{\left[
\begin{array}{c}
0\\-1\\0\\ \vdots \\0
\end{array}
\right]};
\end{equation}
and
\begin{equation}\label{TaylorB}
\sum_{q=0}^{k-1}b_{k,q}(\beta)\phi(t^{n+2-k+q})=\phi(t^{n+\beta})+\mathcal{O}(\delta t^k),
\end{equation}
with $b_{k,q}(\beta)$ can be obtained by solving the linear system:
\begin{equation}\label{solve_bkq}
{\left[
\begin{array}{ccccc}
1&1&...&...&1\\
\beta-1&\beta&...&...&\beta+k-2\\
\vdots &\vdots&\vdots&\vdots&\vdots\\
(\beta-1)^{k-1}&\beta^{k-1}&...&...&(\beta+k-2)^{k-1}
\end{array}
\right]}{\left[
\begin{array}{c}
b_{k,k-1}(\beta)\\b_{k,k-2}(\beta)\\\vdots\\b_{k,0}(\beta)
\end{array}
\right]}=
{\left[
\begin{array}{c}
1\\0\\\vdots\\0
\end{array}
\right]};
\end{equation}
and finally
\begin{equation}\label{TaylorC}
\sum_{q=0}^{k-1}c_{k,q}(\beta)\phi(t^{n+1-k+q})=\phi(t^{n+\beta})+\mathcal{O}(\delta t^k),
\end{equation}
with $c_{k,q}(\beta)$ can be obtained by solving the linear system:
\begin{equation}\label{solve_ckq}
{\left[
\begin{array}{ccccc}
1&1&...&...&1\\
\beta&\beta+1&...&...&\beta+k-1\\
\vdots &\vdots&\vdots&\vdots&\vdots\\
\beta^{k-1}&(\beta+1)^{k-1}&...&...&(\beta+k-1)^{k-1}
\end{array}
\right]}{\left[
\begin{array}{c}
c_{k,k-1}(\beta)\\c_{k,k-2}(\beta)\\\vdots\\c_{k,0}(\beta)
\end{array}
\right]}=
{\left[
\begin{array}{c}
1\\0\\\vdots\\0
\end{array}
\right]}.
\end{equation}
Next, we would like to introduce the following notations to simplify the presentation below,
\begin{equation}\label{ABC_def}
\quad A_k^{\beta}(\phi^{i})=\sum_{q=0}^{k}a_{k,q}(\beta)\phi^{i-k+q},\,\quad B_k^{\beta}(\phi^{i})=\sum_{q=0}^{k-1}b_{k,q}(\beta)\phi^{i-k+1+q},\,\quad C_k^{\beta}(\phi^{i})=\sum_{q=0}^{k-1}c_{k,q}(\beta) \phi^{i-k+1+q}.
\end{equation}
Now, with the above notations,  the generalized $k$-th order BDF type schemes with explicit treatment of the pressure for the time dependent Stokes equation (in the absence of $\bm f$ and nonlinear term in \eqref{NS})  are as follows:
\begin{subequations}\label{scheme_stoke}
\begin{align}
&\frac{A_k^{\beta}(\bm u^{n+1})}{\delta t}-\nu \Delta B_k^{\beta}(\bm u^{n+1})+\nabla C_k^{\beta}(p^{n})=0, \label{scheme_stoke1}\\
&(\nabla p^{n+1},\nabla q)=-\nu(\nabla\times\nabla\times \bm u^{n+1},\nabla q),\quad\forall q\in H^1(\Omega).\label{scheme_stoke2}
\end{align}
\end{subequations}
\subsection{Linear stability regions}
Before providing the stability proof for the new schemes \eqref{scheme_stoke}, we would like to first investigate the linear stability regions of the new BDF type schemes. For the test equation $\phi_t= \lambda\phi$, by performing the Taylor expansions at $t^{n+\beta}$, a more  general BDF type method can be written as
\begin{equation}\label{genBDF}
\frac{A_k^{\beta}(\phi^{n+1})}{\delta t}=\lambda B_k^{\beta}(\phi^{n+1}).
\end{equation}
In order to study the stability region for  $\beta \ne 1$, we set $\phi^n=\mu^n$ and ${z=\lambda\delta t}$ in \eqref{genBDF} to obtain its characteristic polynomial
\begin{equation}\label{eq:chara}
\sum_{q=0}^k (a_{k,q}(\beta)- b_{k,q-1}(\beta)z)\mu^q=0,
\end{equation}
where $a_{k,q}(\beta)$ and $b_{k,q}(\beta)$ are defined in \eqref{solve_akq} and \eqref{solve_bkq} respectively and we further define $b_{k,-1}=0$ in \eqref{eq:chara}.
{\color{black} Then the region of absolute stability is the set of all $ z \in \mathbb{C}$ such that all roots $\mu$ of the characteristic equation \eqref{eq:chara} satisfy  $|\mu| \leq 1$, and any root with $|\mu| = 1$ is simple.}
In Table. \ref{stability}, we plot  the stability regions of the general BDF type method  \eqref{genBDF} for $\beta=1,3,6,9$.
 We observe that  the  stability regions increases as we increases $\beta$, at the expense of increased truncation error.
\begin{table}[h]
\centering
\begin{tabular}{c|cccc}
\hline
& $\beta=1$ &$\beta=3$ & $\beta=6$ & $\beta=9$\\
\hline
Second order& \begin{minipage}[b]{0.18\columnwidth}
		\centering
		\raisebox{-.5\height}{\includegraphics[width=0.95\linewidth]{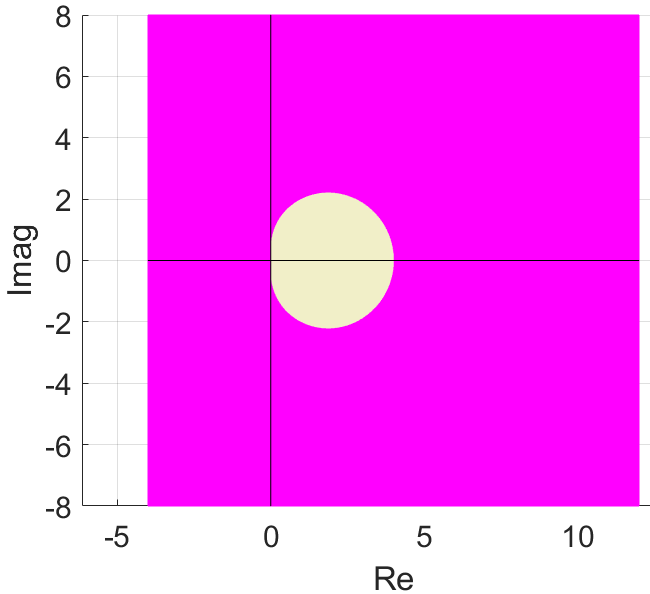}}
	\end{minipage}
&\begin{minipage}[b]{0.18\columnwidth}
		\centering
		\raisebox{-.5\height}{\includegraphics[width=0.95\linewidth]{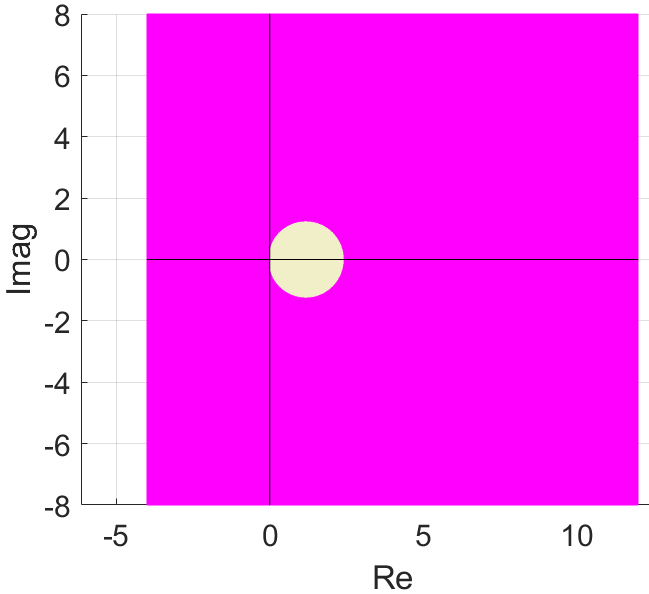}}
	\end{minipage}
&\begin{minipage}[b]{0.18\columnwidth}
		\centering
		\raisebox{-.5\height}{\includegraphics[width=0.95\linewidth]{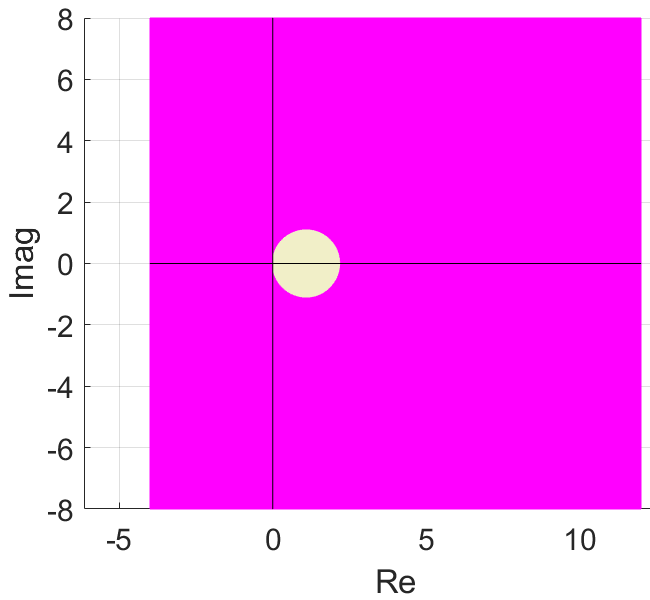}}
	\end{minipage}
&\begin{minipage}[b]{0.18\columnwidth}
        \centering
		\raisebox{-.5\height}{\includegraphics[width=0.95\linewidth]{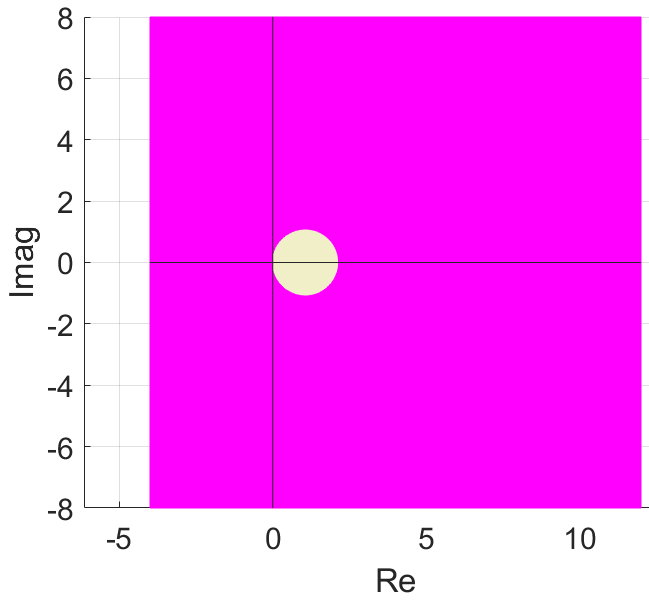}}
	\end{minipage}\\
\hline
Third order & \begin{minipage}[b]{0.2\columnwidth}
		\centering
		\raisebox{-.5\height}{\includegraphics[width=0.95\linewidth]{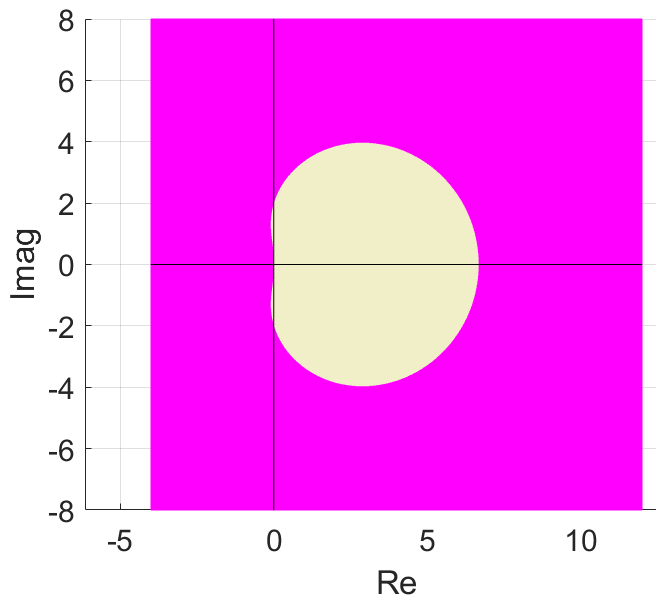}}
	\end{minipage}
&\begin{minipage}[b]{0.18\columnwidth}
		\centering
		\raisebox{-.5\height}{\includegraphics[width=0.95\linewidth]{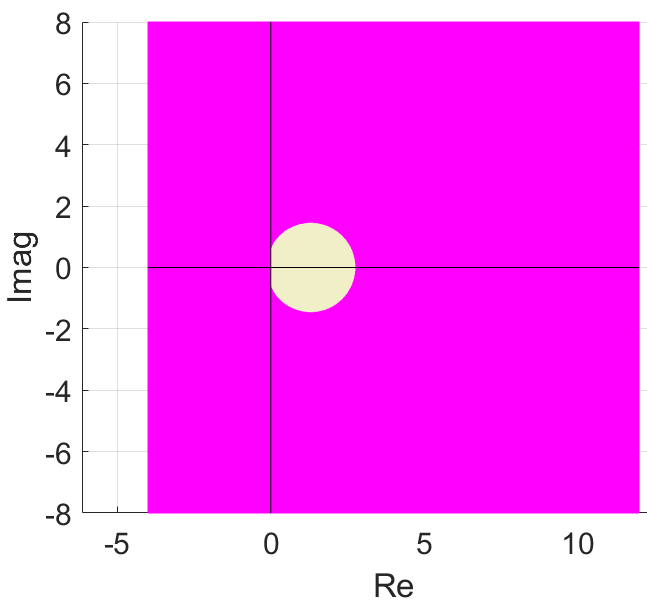}}
	\end{minipage}
&\begin{minipage}[b]{0.18\columnwidth}
		\centering
		\raisebox{-.5\height}{\includegraphics[width=0.95\linewidth]{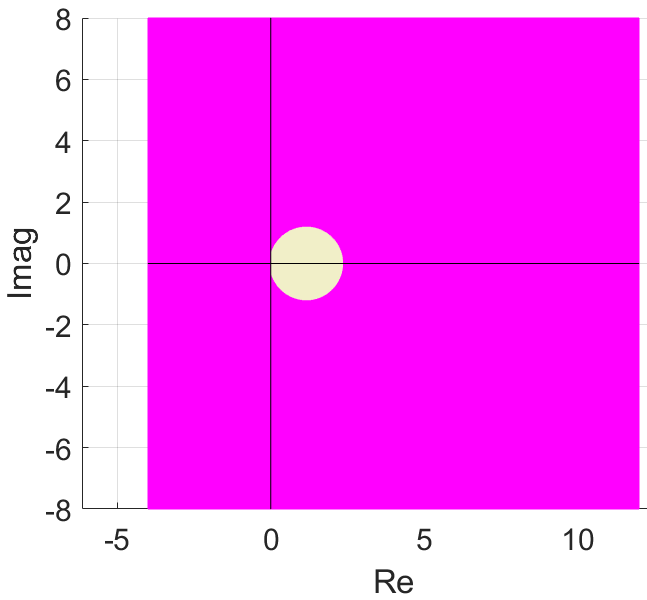}}
	\end{minipage}
&\begin{minipage}[b]{0.18\columnwidth}
		\centering
		\raisebox{-.5\height}{\includegraphics[width=0.95\linewidth]{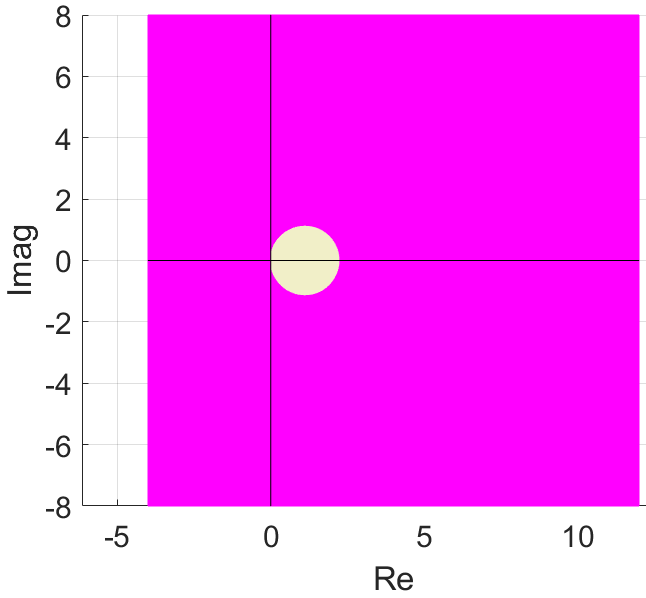}}
	\end{minipage}\\
\hline
Fourth order & \begin{minipage}[b]{0.18\columnwidth}
		\centering
		\raisebox{-.5\height}{\includegraphics[width=0.95\linewidth]{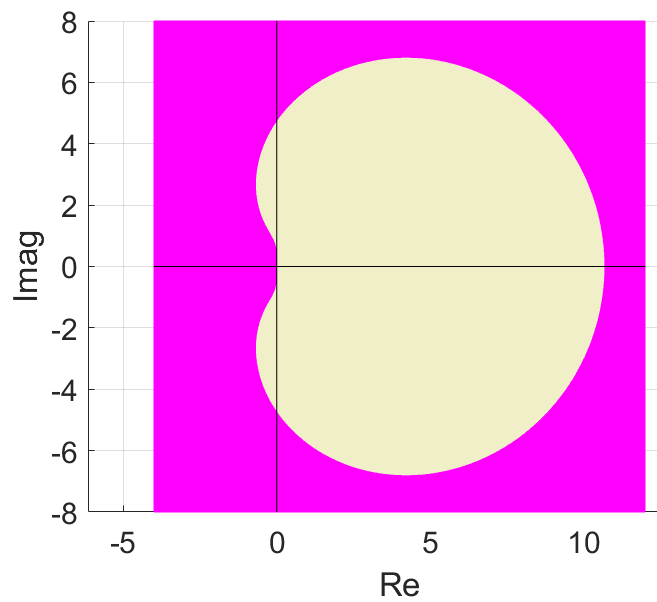}}
	\end{minipage}
&\begin{minipage}[b]{0.18\columnwidth}
		\centering
		\raisebox{-.5\height}{\includegraphics[width=0.95\linewidth]{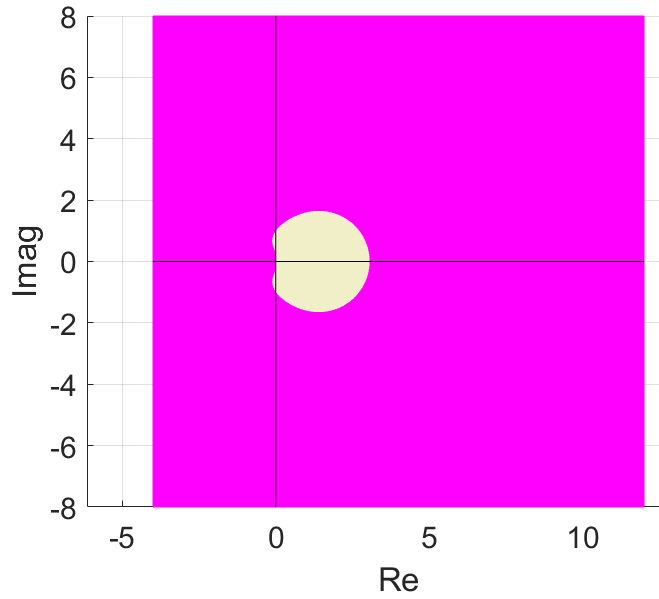}}
	\end{minipage}
&\begin{minipage}[b]{0.18\columnwidth}
		\centering
		\raisebox{-.5\height}{\includegraphics[width=0.95\linewidth]{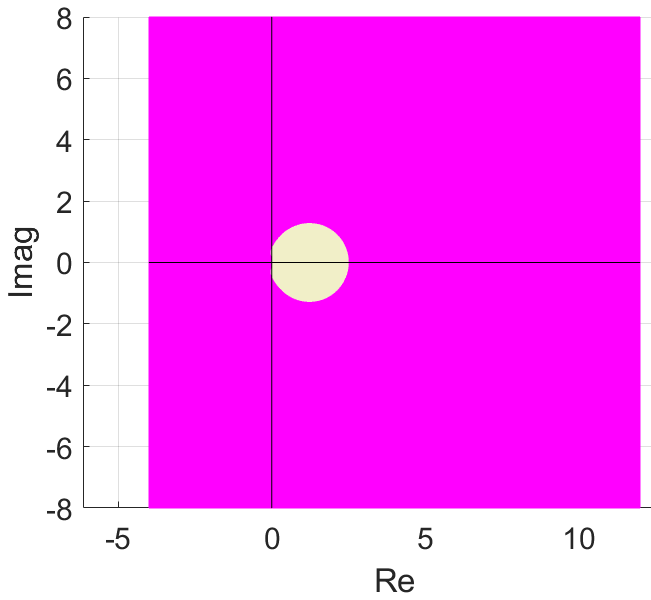}}
	\end{minipage}
&\begin{minipage}[b]{0.18\columnwidth}
		\centering
		\raisebox{-.5\height}{\includegraphics[width=0.95\linewidth]{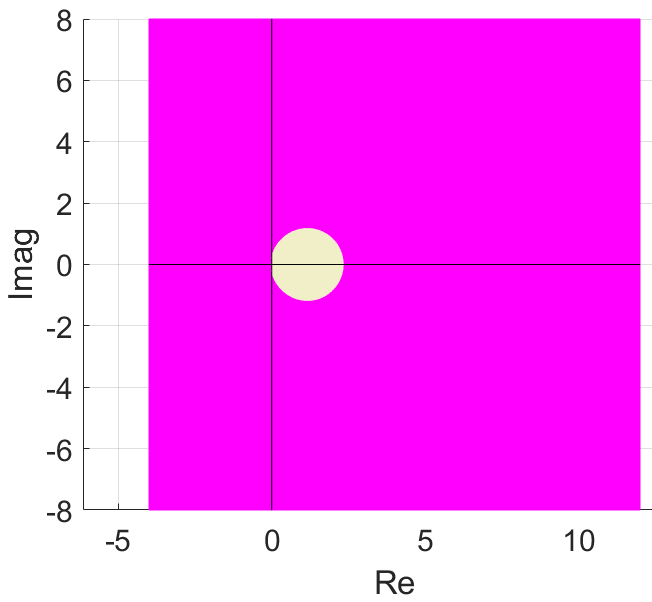}}
	\end{minipage}\\
\hline
\end{tabular}
\caption{The pink parts show the linear stability regions. }\label{stability}
\end{table}
\subsection{A uniform multiplier}
To conduct the stability and error analysis,  we need to choose a suitable $\beta$ for schemes of different orders. In the following, we choose $\beta=\beta_k$ as follows:
\begin{equation}\label{betak}
\beta_2=3,\,\beta_3=6,\,\beta_4=9.
\end{equation}
These choices of $\beta$ are sufficient for our purposes, but not necessarily the smallest possible. We recall that as $\beta$ increases, so does the truncation error. So it is desirable to choose $\beta$ as small as possible while maintaining stability.
 For the rest of the paper, we fix $\beta_k$ as \eqref{betak} and then the explicit expression of \eqref{scheme_stoke1} becomes:

\noindent $k=2$, $\beta=3$:
\begin{small}
\begin{equation}
\frac{7\bm u^{n+1}-12\bm u^n+5\bm u^{n-1}}{2\delta t}-\nu \Delta(3\bm u^{n+1}-2\bm u^n)+\nabla (4p^n-3p^{n-1})=0;
\end{equation}
\end{small}\noindent $k=3$, $\beta=6$:
\begin{small}
\begin{equation}
\frac{146\bm u^{n+1}-393\bm u^n+354\bm u^{n-1}-107\bm u^{n-2}}{6\delta t}-\nu \Delta(21\bm u^{n+1}-35\bm u^n+15\bm u^{n-1})+\nabla (28p^n-48p^{n-1}+21p^{n-2})=0;
\end{equation}
\end{small}\noindent $k=4$, $\beta=9$:
\begin{small}
\begin{equation}
\begin{split}
\frac{2289\bm u^{n+1}-8432\bm u^n+11700\bm u^{n-1}-7248\bm u^{n-2}+1691\bm u^{n-3}}{12\delta t}&-\nu \Delta(165\bm u^{n+1}-440\bm u^n+396\bm u^{n-1}-120\bm u^{n-2})\\
+&\nabla (220p^n-594p^{n-1}+540p^{n-2}-165p^{n-3})=0;
\end{split}
\end{equation}
\end{small}

A key step in the proof  is to properly split $B_k^{\beta_k}(\bm u^{n+1})$ into three parts as follows:
\begin{equation}\label{splitB}
B_k^{\beta_k}(\bm u^{n+1})=\eta_k C_k^{\beta_k}(\bm u^{n+1})+D_k^{\beta_k}(\bm u^{n+1})+F_k^{\beta_k}(\bm u^{n+1}),\quad k=2,3,4,
\end{equation}
with $\eta_k$ being a suitable positive number to be specified, and
\begin{subequations}\label{fkq}
\begin{align}
& F_2^{\beta_2}(\bm u^{n+1}):=\sum_{q=0}^{1}f_{2,q}(\beta_2) \bm u^{n+q}=\frac{1}{100}\bm u^{n+1}+0\bm u^{n},\\
& F_3^{\beta_3}(\bm u^{n+1}):=\sum_{q=0}^{2}f_{3,q}(\beta_3) \bm u^{n-1+q}=\frac{1}{100}(27\bm u^{n+1}-21\bm u^{n})+0\bm u^{n-1},\\
& F_4^{\beta_4}(\bm u^{n+1}):=\sum_{q=0}^{3}f_{4,q}(\beta_4) \bm u^{n-2+q}=\frac{2}{10^5}(215\bm u^{n+1}-375\bm u^{n}+165\bm u^{n-1})+0\bm u^{n-2},
\end{align}
\end{subequations}
and
\begin{equation}\label{dkq}
d_{k,q}(\beta_k)=b_{k,q}(\beta_k)-\eta_k c_{k,q}(\beta_k)-f_{k,q}(\beta_k),\quad D_k^{\beta_k}(\bm u^{n+1}):=\sum_{q=0}^{k-1}d_{k,q}(\beta_k) \bm u^{n-k+2+q}.
\end{equation}
The reasons for the above splitting will become clear later. In the above,
 $\eta_k$ should be chosen such that $\eta_k>\frac{\sqrt{2}}{2}\approx 0.7071$, the reason will be given in \eqref{etak_cond} below.

By choosing $F_k^{\beta_k}$ as in \eqref{fkq}, we have the following inequalities,  which are useful in the next section. The explicit telescoping terms given in appendix \ref{app1} imply there exits $U_k(\bm u^{i},...,\bm u^{i+2-k})\ge 0,\,k=2,3,4$ such that
\begin{equation}\label{Fk_ineq}
\big(F_k^{\beta_k}(\bm u^{n+1}), C_k^{\beta_k}(\bm u^{n+1}) \big) \ge \kappa_k \|\bm u^{n+1}\|^2+U_k(\bm u^{n+1},...,\bm u^{n+3-k})-U_k(\bm u^{n},...,\bm u^{n+2-k}),
\end{equation}
with
\begin{equation}\label{deltak}
\kappa_2=\frac{1}{100},\quad \kappa_3=\frac{3}{50},\quad \kappa_4=\frac{1}{10^4}.
\end{equation}

In the following, we fix $\eta_k=0.71$ and  $\beta_k$  as in \eqref{betak}  for $k=2,3,4$. Then, we can establish two important lemmas which play  key roles in the stability and error analysis. To this end, we introduce some polynomials with coefficients appearing in \eqref{ABC_def} and \eqref{dkq},
\begin{equation}\label{ACD_poly}
\tilde{A}_k^{\beta_k}(\zeta)=\sum_{q=0}^{k}a_{k,q}(\beta_k)\zeta^q,\,\quad \tilde{C}_k^{\beta_k}(\zeta)=\sum_{q=0}^{k-1}c_{k,q}(\beta_k)\zeta^q,\,\quad \tilde{D}_k^{\beta_k}(\zeta^q)=\sum_{q=0}^{k-1}d_{k,q}(\beta_k) \zeta^q.
\end{equation}
\begin{lemma}\label{AC_lemma}
Given $\tilde{A}^{\beta_k}_k(\zeta), \tilde{C}^{\beta_k}_k(\zeta)$ defined in \eqref{ACD_poly} and $\beta_k$ as in \eqref{betak}, we have
\begin{equation}\label{AC_lemma1}
\rm{gcd}\big(\tilde{A}^{\beta_k}_k(\zeta),\zeta\tilde{C}^{\beta_k}_k(\zeta)\big)=1,\,k=2,3,4,
\end{equation}
 i.e. they have no common divisor, and
\begin{equation}\label{AC_lemma2}
\text{Re}\frac{\tilde{A}_k^{\beta_k}(\zeta)}{\zeta\tilde{C}_k^{\beta_k}(\zeta)}\,>0, \quad \text{for}\, |\zeta|>1,\,k=2,3,4.
\end{equation}
\end{lemma}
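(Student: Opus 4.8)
The plan is to regard both assertions as a finite verification for the three fixed pairs $(k,\beta_k)$ in \eqref{betak}, and to reduce the positivity \eqref{AC_lemma2} to a boundary question via the minimum principle for harmonic functions. First I would solve the linear systems \eqref{solve_akq} and \eqref{solve_ckq} for $k=2,3,4$ with $\beta_2=3,\,\beta_3=6,\,\beta_4=9$ to obtain the explicit coefficients $a_{k,q}(\beta_k),\,c_{k,q}(\beta_k)$ and hence the three explicit polynomial pairs $\tilde A_k^{\beta_k}(\zeta),\,\tilde C_k^{\beta_k}(\zeta)$ in \eqref{ACD_poly}; for instance $\tilde A_2^{3}(\zeta)=\tfrac72\zeta^2-6\zeta+\tfrac52$ and $\tilde C_2^{3}(\zeta)=4\zeta-3$.

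For the coprimality \eqref{AC_lemma1} I would show that $\tilde A_k^{\beta_k}$ and $\zeta\tilde C_k^{\beta_k}$ share no root. The extra factor $\zeta$ is harmless because $\tilde A_k^{\beta_k}(0)=a_{k,0}(\beta_k)\neq0$ (the coefficient of the oldest level is nonzero for each scheme), and a common root of $\tilde A_k^{\beta_k}$ and $\tilde C_k^{\beta_k}$ is ruled out by computing their resultant (equivalently the Euclidean $\gcd$) and checking that it does not vanish; this is a direct calculation for each of the three pairs.

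The core is \eqref{AC_lemma2}. Put $R(\zeta)=\tilde A_k^{\beta_k}(\zeta)/\big(\zeta\tilde C_k^{\beta_k}(\zeta)\big)$, a rational function whose numerator and denominator both have degree $k$, hence analytic at $\infty$. Its poles are $\zeta=0$ and the roots of $\tilde C_k^{\beta_k}$, and I would verify that every root of $\tilde C_k^{\beta_k}$ lies strictly inside the unit disk (for $k=2$ the only root is $3/4$; for $k=3,4$ this is a root-location check via the Schur--Cohn criterion or a verified numerical computation). Then $R$ is analytic on $\{|\zeta|\ge1\}\cup\{\infty\}$, so $\text{Re}\,R$ is harmonic there, and --- conveniently after the substitution $\zeta\mapsto1/w$ that maps the exterior onto the open unit disk --- the minimum principle shows its infimum over $|\zeta|>1$ is attained on the circle $|\zeta|=1$. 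Since $\zeta\tilde C_k^{\beta_k}\neq0$ on the circle, $\text{Re}\,R=g_k/|\zeta\tilde C_k^{\beta_k}|^2$ with
\[
g_k(\theta):=\text{Re}\Big(\tilde A_k^{\beta_k}(e^{i\theta})\,\overline{e^{i\theta}\tilde C_k^{\beta_k}(e^{i\theta})}\Big),
\]
so it suffices to prove $g_k(\theta)\ge0$. As the coefficients are real, $g_k$ is a pure cosine polynomial of degree $k$, so the substitution $x=\cos\theta\in[-1,1]$ turns the claim into nonnegativity of a polynomial of degree $\le k\le4$ on $[-1,1]$, which I would settle by explicit factorization or a sum-of-squares decomposition. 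The first row of \eqref{solve_akq} gives $\tilde A_k^{\beta_k}(1)=\sum_q a_{k,q}(\beta_k)=0$, so $R(1)=0$ and $g_k(0)=0$: the boundary minimum equals $0$, attained at $\zeta=1$.

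Strictness then follows from the strong minimum principle: $R$ is non-constant, so the harmonic function $\text{Re}\,R$ cannot attain its minimum value $0$ at an interior point of $\{|\zeta|>1\}$; together with $\text{Re}\,R\ge0$ from the weak principle this gives the strict inequality in \eqref{AC_lemma2}. I expect the main obstacle to be the boundary step --- proving $g_k\ge0$ on $[-1,1]$ for $k=3,4$ together with the companion root-location of $\tilde C_k^{\beta_k}$ inside the unit disk --- since these are the places where the specific values $\beta_k$ in \eqref{betak} are tuned so that the required nonnegativity and pole conditions actually hold.
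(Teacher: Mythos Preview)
Your approach is correct and essentially mirrors the argument the paper gives in Appendix~\ref{app2} for the companion Lemma~\ref{DC_lemma}: compute the explicit polynomials, locate the roots of the denominator inside the unit disk, use the maximum principle to reduce to the circle, and check that the resulting trigonometric polynomial (written as a polynomial in $\cos\theta$) is nonnegative on $[-1,1]$. Your handling of the boundary zero at $\zeta=1$ coming from $\tilde A_k^{\beta_k}(1)=0$ via the strong minimum principle is the right way to recover the strict inequality in~\eqref{AC_lemma2}.

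The paper, however, does not prove Lemma~\ref{AC_lemma} in this way; it simply cites Theorem~1 of \cite{HS2024}, where both~\eqref{AC_lemma1} and~\eqref{AC_lemma2} are established for \emph{all} $\beta>1$, not just the three values in~\eqref{betak}. So your case-by-case verification is a perfectly valid, self-contained substitute, but your closing remark that the $\beta_k$ are ``tuned'' so that the positivity and pole conditions hold here is misplaced: the cited general result shows Lemma~\ref{AC_lemma} holds for every $\beta>1$, and the particular choices $\beta_2=3,\ \beta_3=6,\ \beta_4=9$ are dictated by Lemma~\ref{DC_lemma}, not this one. What your approach buys is independence from the external reference; what the paper's route buys is the stronger, parameter-free statement.
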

The proof of the above lemma in a more general form was given in \cite{HS2024} (Theorem 1), which shows  \eqref{AC_lemma1} and \eqref{AC_lemma2} are true for all $\beta_k > 1$. 
\begin{lemma}\label{DC_lemma}
Given $\tilde{D}^{\beta_k}_k(\zeta), \tilde{C}^{\beta_k}_k(\zeta)$ defined in \eqref{ACD_poly}, $\beta_k$ as in \eqref{betak} and $\eta_k=0.71$, we have
\begin{equation}
\rm{gcd}\big(\tilde{D}^{\beta_k}_k(\zeta),\tilde{C}^{\beta_k}_k(\zeta)\big)=1,\, k=2,3,4,
\end{equation}
 i.e. they have no common divisor, and
\begin{equation}
\text{Re}\frac{\tilde{D}_k^{\beta_k}(\zeta)}{\tilde{C}_k^{\beta_k}(\zeta)}\,>0, \quad \text{for}\, |\zeta|>1,\,k=2,3,4.
\end{equation}
\end{lemma}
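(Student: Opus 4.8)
The plan is to verify the two claims directly for each $k\in\{2,3,4\}$: once $\beta_k$ is fixed as in \eqref{betak} and $\eta_k=0.71$, the coefficients $c_{k,q}(\beta_k)$ and $d_{k,q}(\beta_k)=b_{k,q}(\beta_k)-\eta_k c_{k,q}(\beta_k)-f_{k,q}(\beta_k)$ from \eqref{dkq} and \eqref{fkq} are explicit rational numbers, so $\tilde C_k^{\beta_k}$ and $\tilde D_k^{\beta_k}$ in \eqref{ACD_poly} are explicit polynomials of degree $k-1\le 3$. The strategy mirrors the one behind Lemma \ref{AC_lemma}: set $g(\zeta):=\tilde D_k^{\beta_k}(\zeta)/\tilde C_k^{\beta_k}(\zeta)$ and study the harmonic function $\operatorname{Re} g$ on the closed exterior of the unit disk. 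This is exactly the positive-real hypothesis needed to apply Dahlquist's Lemma \ref{Gstability} with $\alpha=\tilde D_k^{\beta_k}$ and $\mu=\tilde C_k^{\beta_k}$.

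First I would locate the zeros of the denominator. Solving $\tilde C_k^{\beta_k}(\zeta)=0$ explicitly (degree $\le 3$), I would check that all its roots lie strictly inside the unit disk, $|\zeta|<1$; this is both necessary for the conclusion and the key structural fact. Since $g$ has a finite limit at $\zeta=\infty$ (the ratio of leading coefficients), it then extends analytically to $\{|\zeta|\ge 1\}\cup\{\infty\}$, so $h:=\operatorname{Re} g$ is harmonic on the spherical domain $\{|\zeta|>1\}\cup\{\infty\}$ whose boundary is the unit circle.

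The core step is the boundary inequality. By the minimum principle for harmonic functions, $h$ attains its minimum over $\{|\zeta|\ge1\}$ on $|\zeta|=1$, so it suffices to show $h>0$ there. Writing $\zeta=e^{i\theta}$ and using that the coefficients are real,
\begin{equation*}
\operatorname{Re}\frac{\tilde D_k^{\beta_k}(\zeta)}{\tilde C_k^{\beta_k}(\zeta)}
=\frac{\operatorname{Re}\big(\tilde D_k^{\beta_k}(\zeta)\,\overline{\tilde C_k^{\beta_k}(\zeta)}\big)}{|\tilde C_k^{\beta_k}(\zeta)|^2},
\end{equation*}
and the denominator is nonzero by the root-location step, so it is enough to prove that $P_k(\theta):=\operatorname{Re}\big(\tilde D_k^{\beta_k}(e^{i\theta})\,\overline{\tilde C_k^{\beta_k}(e^{i\theta})}\big)>0$ for all $\theta$. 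Since $P_k$ is a cosine polynomial with frequencies $0,\dots,k-1$, I would reduce it to an algebraic polynomial $Q_k(x)$ in $x=\cos\theta\in[-1,1]$ of degree $k-1$ via $\cos(m\theta)=T_m(\cos\theta)$ (Chebyshev), and then verify $\min_{x\in[-1,1]}Q_k(x)>0$ by locating the roots of $Q_k'$ and evaluating at them and at $x=\pm1$. For $k=2$ this is a linear check, and for $k=3,4$ a finite real-root analysis of a quadratic or cubic. For the gcd claim, a common factor of $\tilde C_k^{\beta_k}$ and $\tilde D_k^{\beta_k}$ would be a shared root, necessarily inside the disk; I would exclude this by computing the resultant $\operatorname{Res}(\tilde C_k^{\beta_k},\tilde D_k^{\beta_k})$ from the explicit coefficients and checking it is nonzero (equivalently, the Euclidean algorithm yields a nonzero constant remainder), a short exact computation for each $k$.

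I expect the boundary positivity to be the only delicate point: $\eta_k=0.71$ is chosen just above $\sqrt2/2\approx0.7071$ (cf.\ the constant $\tfrac12+\varepsilon$ in Lemma \ref{stokespre}), so the margin in $\min_{x}Q_k(x)>0$ is presumably small and the reduction to $Q_k$ together with the critical-point analysis must be carried out carefully, ideally in rational arithmetic to make the verification reproducible. Should $P_k$ merely touch zero at isolated $\theta$, the strong minimum principle still forces $h>0$ throughout the open exterior, so strict positivity on $\{|\zeta|>1\}$ is preserved. The root-location and resultant steps are necessary companions but, being degree-$\le 3$ computations, are comparatively routine.
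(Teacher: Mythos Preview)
Your proposal is correct and follows essentially the same route as the paper's proof in Appendix~\ref{app2}: for each $k$ the paper writes out $\tilde C_k^{\beta_k}$ and $\tilde D_k^{\beta_k}$ explicitly, checks the roots of $\tilde C_k^{\beta_k}$ lie inside the unit disk, reduces via the maximum principle to $\operatorname{Re}[\tilde D_k^{\beta_k}(e^{i\theta})\tilde C_k^{\beta_k}(e^{-i\theta})]\ge 0$, and then verifies the resulting polynomial in $y=\cos\theta$ is nonnegative on $[-1,1]$ (with minima roughly $0.14$, $0.21$, and $3\times 10^{-4}$ for $k=2,3,4$, confirming your expectation that the $k=4$ margin is tight). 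The only cosmetic difference is that the paper establishes $\gcd=1$ by listing the roots of both polynomials rather than computing a resultant.
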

We shall defer   the proof to  Appendix \ref{app2}.

Several remarks are in order.
\begin{itemize}
\item One may choose other forms of  $F_k^{\beta_k}$ in \eqref{fkq}. As long as \eqref{Fk_ineq} with $\kappa_k>0$ and Lemma \ref{DC_lemma} are still true.
\item {\color{black} For larger values of $\eta_k$, a larger $\beta$ may be required to prove Lemma \ref{DC_lemma}, which in turn introduces a larger truncation error in the scheme. Therefore, we complete the proof by choosing $\eta_k = 0.71$-as small as possible while still satisfying $\eta_k > \frac{\sqrt{2}}{2}$.}
\item $\beta_k$ can also be non-integer, for example, one can prove the above two lemmas by the same processes by choosing $\beta_2=2.9$ for the second order scheme.
\end{itemize}

\subsection{Unconditional stability}
With the help of Lemma \ref{stokespre}--Lemma \ref{DC_lemma}, we can prove the following results for the scheme \eqref{scheme_stoke}.

\begin{theorem} {\color{black} Suppose $\Omega$ satisfies the conditions in Lemma \ref{stokespre}} and given  $\bm u^{i},\, i=1,..,k-1$ such that $\|\nabla \bm u^{i}\|^2+\delta t \|\Delta \bm u^{i}\|^2\le C \|\nabla \bm u^0\|^2,\, i=1,..,k-1$.
The scheme \eqref{scheme_stoke} with {\color{black} $\beta = \beta_k$ }chosen as in \eqref{betak} is unconditionally stable in the sense that, {\color{black} for all $n \ge 0$, we have}
\begin{equation}
\|\nabla \bm u^{n+1}\|^2+\delta t\sum_{i=k-1}^{n}\|\Delta C_k^{\beta_k}(\bm u^{i+1})\|^2+\delta t\sum_{i=0}^{n}\|\Delta \bm u^{i+1}\|^2 +\delta t \sum_{i=0}^{n} \|\nabla p^{i+1}\|^2 \le C,\,k=2,3,4,
\end{equation}
where $C$ is a constant independent of the time step $\delta t$ and $n$.
\end{theorem}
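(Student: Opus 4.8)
The plan is to test the momentum equation \eqref{scheme_stoke1} with $-\Delta C_k^{\beta_k}(\bm u^{n+1})$, which is the natural multiplier compatible with the G-stability structure encoded in Lemmas \ref{AC_lemma} and \ref{DC_lemma}. After integration by parts this produces three groups of terms: a discrete time-derivative term $\frac{1}{\delta t}(\nabla A_k^{\beta_k}(\bm u^{n+1}),\nabla C_k^{\beta_k}(\bm u^{n+1}))$, a viscous term $\nu(\Delta B_k^{\beta_k}(\bm u^{n+1}),\Delta C_k^{\beta_k}(\bm u^{n+1}))$, and a pressure term $(\nabla C_k^{\beta_k}(p^{n}),-\Delta C_k^{\beta_k}(\bm u^{n+1}))$. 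I will treat each group so that, after summation in $n$, the first yields an $l^\infty(H^1)$ bound, the second the $l^2(H^2)$ dissipation, and the third is absorbed into the second.

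For the time-derivative term, since $\tilde A_k^{\beta_k}$ and $\zeta\tilde C_k^{\beta_k}$ satisfy the positivity condition \eqref{AC_lemma2}, Dahlquist's Lemma \ref{Gstability} applied in the $H^1$ seminorm (with the test vectors $\upsilon^i=\nabla\bm u^{n+1-k+i}$) produces a symmetric positive definite $G$ so that this term telescopes into $\frac{1}{\delta t}(\|\nabla\bm u^{n+1}\|_{G}^2-\|\nabla\bm u^{n}\|_{G}^2)$ plus a nonnegative square, giving the $l^\infty$ control of $\|\nabla\bm u^{n+1}\|^2$. For the viscous term I use the splitting \eqref{splitB}: the part $\eta_k C_k^{\beta_k}(\bm u^{n+1})$ gives the dissipation $\nu\eta_k\|\Delta C_k^{\beta_k}(\bm u^{n+1})\|^2$; the part $D_k^{\beta_k}(\bm u^{n+1})$, via the positivity in Lemma \ref{DC_lemma} now applied in the $H^2$ seminorm, telescopes into a second $G$-difference; and the part $F_k^{\beta_k}(\bm u^{n+1})$, via inequality \eqref{Fk_ineq} applied to $\Delta\bm u$ in place of $\bm u$, contributes $\nu\kappa_k\|\Delta\bm u^{n+1}\|^2$ plus a telescoping remainder, furnishing the $l^2(H^2)$ control of $\bm u$.

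The main obstacle is the explicitly treated pressure term. The key observation is that the pressure Poisson equation \eqref{scheme_stoke2}, together with the identity $-\nabla\times\nabla\times\bm u=\Delta\bm u-\nabla\nabla\cdot\bm u$ and the Helmholtz decomposition, identifies $\nabla p^{m}$ with $\nu$ times the Stokes pressure gradient, namely $\nabla p^{m}=\nu(\Delta\mathcal{P}-\mathcal{P}\Delta)\bm u^{m}=\nu\nabla p_s(\bm u^{m})$, the spurious $\nabla\nabla\cdot\bm u^{m}$ contributions cancelling. By linearity $\nabla C_k^{\beta_k}(p^{n})=\nu\nabla p_s(C_k^{\beta_k}(\bm u^{n}))$, so Lemma \ref{stokespre} gives $\|\nabla C_k^{\beta_k}(p^n)\|^2\le\nu^2(\tfrac12+\varepsilon)\|\Delta C_k^{\beta_k}(\bm u^{n})\|^2+C\nu^2\|\nabla C_k^{\beta_k}(\bm u^{n})\|^2$. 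Splitting the pressure term by Young's inequality as $\frac{\nu\eta_k}{2}\|\Delta C_k^{\beta_k}(\bm u^{n+1})\|^2+\frac{1}{2\nu\eta_k}\|\nabla C_k^{\beta_k}(p^n)\|^2$ and inserting the Stokes bound, the $\|\Delta C_k^{\beta_k}\|^2$ contributions carry total coefficient $\tfrac{\nu\eta_k}{2}+\tfrac{\nu(1/2+\varepsilon)}{2\eta_k}$ after summation; this is strictly less than the available dissipation $\nu\eta_k$ precisely when $\eta_k^2>\tfrac12$, i.e. $\eta_k>\frac{\sqrt2}{2}$, which is exactly why $\eta_k=0.71$ is chosen. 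The factor $\tfrac12$ in \eqref{stokespre2} is what makes this absorption possible.

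Finally, I will sum the resulting inequality over $n$. The telescoping $G$-terms collapse to their values at the final and initial time levels, the latter bounded by the hypothesis $\|\nabla\bm u^{i}\|^2+\delta t\|\Delta\bm u^{i}\|^2\le C\|\nabla\bm u^0\|^2$; the leftover lower-order terms $C\nu\|\nabla C_k^{\beta_k}(\bm u^{n})\|^2$ from the Stokes estimate, being controlled by the $l^\infty(H^1)$ quantity, feed into the discrete Gronwall Lemma \ref{Gron2}. The bound on $\delta t\sum\|\nabla p^{i+1}\|^2$ then follows directly from $\nabla p^{m}=\nu\nabla p_s(\bm u^m)$ and Lemma \ref{stokespre} once $\delta t\sum\|\Delta\bm u^{i+1}\|^2$ is under control. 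I expect the rigorous identification of the explicit pressure with the Stokes pressure, and the careful bookkeeping of the one-step index shift between $C_k^{\beta_k}(\bm u^{n})$ and $C_k^{\beta_k}(\bm u^{n+1})$ during the absorption, to be the most delicate points.
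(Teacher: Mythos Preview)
Your proposal is correct and follows essentially the same route as the paper: test \eqref{scheme_stoke1} with $-\Delta C_k^{\beta_k}(\bm u^{n+1})$, use Lemma~\ref{AC_lemma} with Lemma~\ref{Gstability} for the time-derivative term, the splitting \eqref{splitB} together with Lemma~\ref{DC_lemma} and \eqref{Fk_ineq} for the viscous term, identify $\nabla C_k^{\beta_k}(p^n)$ with $\nu\nabla p_s(C_k^{\beta_k}(\bm u^n))$ via \eqref{scheme_stoke2} and absorb it by Lemma~\ref{stokespre} and Young's inequality, then sum and apply Gronwall. The only cosmetic differences are that the paper takes the Young parameter $\gamma=\sqrt2$ (the minimizer of $\tfrac{1}{2\gamma}+\tfrac{\gamma}{4}$) whereas you effectively take $\gamma=1/\eta_k$, and the paper records only the inequality $\|\nabla C_k^{\beta_k}(p^n)\|\le\nu\|\nabla p_s(C_k^{\beta_k}(\bm u^n))\|$ rather than the equality you state; both variants lead to the same threshold $\eta_k>\tfrac{\sqrt2}{2}$ and the same conclusion.
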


\begin{proof}
	
  Taking the inner product of \eqref{scheme_stoke1} with $-\Delta C_k^{\beta_k}(\bm u^{n+1})$, we deal with the three terms as follows. First, we split $B_k^{\beta_k}$ as in \eqref{splitB},
\begin{small}
\begin{equation}\label{eq1}
	\begin{split}
		& \big(-\nu\Delta B_k^{\beta_k}(\bm u^{n+1}),-\Delta C_k^{\beta_k}(\bm u^{n+1}) \big) =\nu \Big(\Delta (\eta_kC_k^{\beta_k}(\bm u^{n+1})+D_k^{\beta_k}(\bm u^{n+1})+F_k^{\beta_k}(\bm u^{n+1})), \Delta C_k^{\beta_k}(\bm u^{n+1}) \Big)\\
		&= \eta_k\nu \|\Delta C_k^{\beta_k}(\bm u^{n+1}) \|^2+\nu \big(\Delta D_k^{\beta_k}(\bm u^{n+1}),\Delta C_k^{\beta_k}(\bm u^{n+1})\big)+\nu \big(\Delta F_k^{\beta_k}(\bm u^{n+1}),\Delta C_k^{\beta_k}(\bm u^{n+1}) \big).
	\end{split}
\end{equation}
\end{small}If we choose $\eta_k=0.71$, it follows from Lemma \ref{Gstability} and Lemma \ref{DC_lemma} that there exists a symmetric positive definite matrix $H_k=(h_{ij})\in \mathbb{R}^{(k-1)\times (k-1)}$ such that
\begin{small}
\begin{equation}
\big(\Delta D_k^{\beta_k}(\bm u^{n+1}),\Delta C_k^{\beta_k}(\bm u^{n+1}) \big) \ge \sum_{i,j=1}^{k-1}h_{ij}(\Delta \bm u^{n+2+i-k},\Delta \bm u^{n+2+j-k})-\sum_{i,j=1}^{k-1}h_{ij}(\Delta \bm u^{n+1+i-k},\Delta \bm u^{n+1+j-k}),
\end{equation}
\end{small}
and \eqref{Fk_ineq} implies that
\begin{small}
\begin{equation}
\big(\Delta F_k^{\beta_k}(\bm u^{n+1}),\Delta C_k^{\beta_k}(\bm u^{n+1}) \big) \ge \kappa_k\|\Delta \bm u^{n+1}\|^2+U_k(\Delta \bm u^{n+1},...,\Delta \bm u^{n+3-k})-U_k(\Delta \bm u^{n},...,\Delta \bm u^{n+2-k}).
\end{equation}
\end{small}For the pressure term,
\begin{equation}\label{eq2}
		\big(\nabla C_k^{\beta_k}(p^n),\Delta C_k^{\beta_k}(\bm u^{n+1})  \big)
		 \le \frac{\gamma}{2\nu} \|\nabla C_k^{\beta_k}(p^n)\|^2+\frac{\nu}{2\gamma}\|\Delta C_k^{\beta_k}(\bm u^{n+1})\|^2,
\end{equation}
with $\gamma$ can be any positive number.
A key step is to deal with the first term in the above using Lemma \ref{stokespre}.
We recall from \cite{liuCPAM} that
\begin{equation}\label{stokesp}
(\nabla p_s(\bm u^n),\nabla q)=-(\nabla\times\nabla\times \bm u^n,\nabla q),
\end{equation}
where $p_s(\bm u^n)$ is the Stokes pressure associated with $\bm u^n$ and it follows from \eqref{scheme_stoke2} that
{\color{black}\begin{equation}\label{BDF2NS3}
\big(\nabla C_k^{\beta_k}(p^n),\nabla q\big)=-\nu\big(\nabla\times\nabla\times C_k^{\beta_k}(\bm u^n),\nabla q\big),\quad\forall q\in H^1(\Omega).
\end{equation}}
Taking $q=C_k^{\beta_k}(p^n)$ in \eqref{BDF2NS3} and in \eqref{stokesp}, we find from \eqref{stokesp} with $\bm u=C_k^{\beta_k}(\bm u^n)$ that
{\color{black}
\begin{equation}\label{stokesp2}
 \|\nabla C_k^{\beta_k}(p^n)\|\le\nu \|\nabla p_s(C_k^{\beta_k}(\bm u^n))\|.
\end{equation}}
Now, we can use \eqref{stokesp2} and \eqref{stokespre2} to bound the first term as follows
\begin{equation}\label{eq3}
\frac{\gamma}{2\nu}	\|\nabla  C_k^{\beta_k}(p^n)\|^2 \le \frac{\gamma\nu}{2} \|\nabla  p_s(C_k^{\beta_k}(\bm u^n))\|^2
	\le \gamma\nu (\frac{1}{4}+\frac{\varepsilon}{2})\| \Delta C_k^{\beta_k}(\bm u^n)\|^2 +C\gamma\nu \|\nabla C_k^{\beta_k}(\bm u^n)\|^2,
\end{equation}
with $\varepsilon >0$ which can be  arbitrarily small.
We observe from \eqref{eq1}-\eqref{eq3} that to ensure stability, we need
\begin{equation}\label{etak_cond}
	\eta_k \ge \mathop{\rm {min}}\limits_{\gamma>0}\big(\frac{1}{2\gamma}+\gamma(\frac{1}{4}+\frac{\varepsilon}{2})\big)\overset{\gamma=\sqrt{2}}{=}\frac{\sqrt{2}}{2}+\frac{\sqrt{2}\varepsilon}{2}.
\end{equation}
As $\varepsilon$ can be chosen arbitrarily small, we only need to choose $\eta_k>\frac{\sqrt{2}}{2}\approx 0.7071$ to ensure \eqref{etak_cond} and that is why we fix $\eta_k=0.71$.
It remains to deal with the last term:
\begin{equation}
	\frac1{\delta t}\big(A_k^{\beta_k}(\bm u^{n+1}),-\Delta C_k^{\beta_k}(\bm u^{n+1}) \big).
\end{equation}
Again, it follows from Lemma \ref{Gstability} and Lemma \ref{AC_lemma} that there exists symmetric positive definite matrix $G_k=(g_{ij})\in \mathbb{R}^{k\times k}$ such that
\begin{equation}\label{eq4}
\big(A_k^{\beta_k}(\bm u^{n+1}),-\Delta C_k^{\beta_k}(\bm u^{n+1}) \big) \ge \sum_{i,j=1}^{k}g_{ij}(\nabla \bm u^{n+1+i-k},\nabla \bm u^{n+1+j-k})-\sum_{i,j=1}^{k}g_{ij}(\nabla \bm u^{n+i-k},\nabla \bm u^{n+j-k}).
\end{equation}

With $\gamma=\sqrt{2}$ and $\eta_k=0.71$, summing up $\delta t(\eqref{eq1}+\eqref{eq2})$  and \eqref{eq4}, using the estimates above,  we find
\begin{small}
\begin{equation}\label{tosumA}
\begin{split}
& \sum_{i,j=1}^{k}g_{ij}(\nabla \bm u^{n+1+i-k},\nabla \bm u^{n+1+j-k})-\sum_{i,j=1}^{k}g_{ij}(\nabla \bm u^{n+i-k},\nabla \bm u^{n+j-k})+\nu \delta t\sum_{i,j=1}^{k-1}h_{ij}(\Delta \bm u^{n+2+i-k},\Delta \bm u^{n+2+j-k})\\
&-\nu\delta t\sum_{i,j=1}^{k-1}h_{ij}(\Delta \bm u^{n+1+i-k},\Delta \bm u^{n+1+j-k})+0.71\nu \delta t\|\Delta C_k^{\beta_k}(\bm u^{n+1}) \|^2+\nu \kappa_k\delta t\|\Delta \bm u^{n+1}\|^2\\
& +\delta t \nu U_k(\Delta \bm u^{n+1},...,\Delta \bm u^{n+3-k})-\delta t \nu U_k(\Delta \bm u^{n},...,\Delta \bm u^{n+2-k})\\
 \le&\frac{\sqrt{2}\nu\delta t}{4}\|\Delta C_k^{\beta_k}(\bm u^{n+1})\|^2+ (\frac{\sqrt{2}}{4}+\frac{\sqrt{2}\varepsilon}{2})\nu\delta t\| \Delta C_k^{\beta_k}(\bm u^n)\|^2 +\sqrt{2}C\nu \delta t\|\nabla C_k^{\beta_k}(\bm u^n)\|^2.
\end{split}
\end{equation}
\end{small}
Now, we can choose $\varepsilon$ small enough such that
\begin{equation}
0.71-\frac{\sqrt{2}}{2}-\frac{\sqrt{2}\varepsilon}{2}=\rho>0,
\end{equation}
 and take the sum of $n$ from $k-1$ to $m\le \frac T{\delta t}-1$ on \eqref{tosumA}. Dropping some unnecessary terms, we obtain
{\color{black} \begin{equation*}
\begin{split}
&  \sum_{i,j=1}^{k}g_{ij}(\nabla \bm u^{m+1+i-k},\nabla \bm u^{m+1+j-k})+ \nu \delta t\sum_{i,j=1}^{k-1}h_{ij}(\Delta \bm u^{m+2+i-k},\Delta \bm u^{n+2+j-k})\\
&+\rho\nu\delta t\sum_{n=k-1}^m\|\Delta C_k^{\beta_k}(\bm u^{n+1})\|^2 +\nu\kappa_k\delta t\sum_{n=k-1}^m\|\Delta \bm u^{n+1}\|^2\\
\le& C \nu \delta t \sum_{n=k-1}^{m} \|\nabla C_k^{\beta_k}(\bm u^{n})\|^2 +C_I\\
\le&C \nu \delta t \sum_{n=0}^{m} \|\nabla \bm u^n\|^2+C_{II},
\end{split}
\end{equation*}
where $C_I$ is a constant depending on $\|\nabla \bm u^{i}\|^2$ and $\delta t \|\Delta \bm u^{i}\|^2,\,i=0,1,...,k-1$.  By the assumption on the initial $k$ steps, we have that $C_{II}$ only depends on $\bm u^0$. One the other hand, let  $\lambda^g_k$ and $\lambda^h_k$ are the smallest eigenvalues of $G_k=(g_{ij})$ and $H_k=(h_{ij})$ respectively, then we have
\begin{small}
\begin{equation*}
\sum_{i,j=1}^{k}g_{ij}(\nabla \bm u^{m+1+i-k},\nabla \bm u^{m+1+j-k})+ \nu \delta t\sum_{i,j=1}^{k-1}h_{ij}(\Delta \bm u^{m+2+i-k},\Delta \bm u^{n+2+j-k}) \ge \lambda^{g}_k\|\nabla \bm u^{m+1}\|^2+\lambda^h_k \nu \delta t \|\Delta \bm u^{m+1}\|^2.
\end{equation*}
\end{small}
Combining the above two inequalities, we have
\begin{equation*}
\begin{split}
&\lambda^{g}_k\|\nabla \bm u^{m+1}\|^2+\lambda^h_k \nu \delta t \|\Delta \bm u^{m+1}\|^2++\rho\nu\delta t\sum_{n=k-1}^m\|\Delta C_k^{\beta_k}(\bm u^{n+1})\|^2 +\nu\kappa_k\delta t\sum_{n=k-1}^m\|\Delta \bm u^{n+1}\|^2\\
\le& C \nu \delta t \sum_{n=0}^{m} \|\nabla \bm u^n\|^2+C_{II}.
\end{split}
\end{equation*}}We can then obtain the desired bound on the velocity by applying Lemma \ref{Gron2} to the above. Finally the bound on the pressure can be derived  by taking $q=p^{n+1}$ in \eqref{BDF2NS2} and {\color{black} using Lemma \ref{stokespre}}.
\end{proof}

\begin{rem}
	The above theorem provides the first unconditional stability results for any decoupled schemes of third- or higher-order for  time-dependent Stokes equations. It also improves the previous result in \cite{HS2023} for the second-order scheme with $\beta=5$ to $\beta= 3$.
\end{rem}
\section{The BDF-IMEX schemes and  error analysis}
In this section, we construct the $k$-th order  BDF-IMEX schemes  for the Navier-Stokes equations and carry out global-in-time error analysis up to fourth order scheme by induction.

\subsection{A general form of  BDF-IMEX schemes}

 Combining the new BDF type scheme with the consistent splitting schemes in \cite{Shen03JCP}, using the notations introduced in \eqref{ABC_def} and choosing $\beta_k$ as \eqref{betak}, we construct the  $k$-th $(k=2,3,4)$ order schemes for  \eqref{NS} as follows:
 \begin{subequations}\label{scheme_NS}
\begin{align}
&\frac{A_k^{\beta_k}(\bm u^{n+1})}{\delta t}-\nu \Delta B_k^{\beta_k}(\bm u^{n+1})+\nabla C_k^{\beta_k}(p^{n})+C_k^{\beta_k}(\bm u^n)\cdot \nabla C_k^{\beta_k}(\bm u^n)=\bm f^{n+\beta_k}, \label{scheme_NS1}\\
&(\nabla p^{n+1},\nabla q)=(\bm f^{n+1}-\bm u^{n+1} \cdot \nabla \bm u^{n+1}-\nu \nabla\times\nabla\times \bm u^{n+1},\nabla q),\quad\forall q\in H^1(\Omega).\label{scheme_NS2}
\end{align}
\end{subequations}

\subsection{Error analysis}

To simplify the presentation, we take $\nu=1$ in \eqref{scheme_NS1} and denote
\begin{equation*}
t^n=n\,\delta t,\quad  \bm e^n=\bm u^n-\bm u(\cdot, t^n),\quad e_p^n=p^n-p(\cdot,t^n).
\end{equation*}

\begin{theorem}\label{ThmNS}
Let {\color{black} $\Omega \subset \mathbb{R}^d$ satisfies the conditions in Lemma \ref{stokespre},} $d=2,3$, $T>0$, $\bm u_0\in \mathbb{V} \cap \bm H_0^2$ and $\bm u$ be the solution of \eqref{NS}. Assuming that $\|\bm f(\cdot,t)\| \le C_f, \, \forall t \in[0, T]$ and  $\bm u^{i}$ are computed  such that $\|\nabla \bm e^{i}\|^2+\delta t \|\Delta \bm e^{i}\|^2\le C\delta t^{2k} \|\nabla \bm u^0\|^2,\, {\color{black}i=0,..,k-1}$.
Let $\bm u^j \,(j\ge k)$ be the solution of  \eqref{scheme_NS} with {\color{black} $\beta= \beta_k$} chosen as in \eqref{betak}, and
  assume that the exact solutions are sufficiently smooth such that
\begin{equation}\label{NScond2}
\bm u \in L^2(0,T;H^2),\quad \frac{\partial ^{k} \bm u}{\partial t^{k}} \in L^2(0,T;H^2),\quad \frac{\partial ^{k+1} \bm u}{\partial t^{k+1}} \in L^2(0,T;L^2),\quad \frac{\partial ^{k} p}{\partial t^{k}} \in L^2(0,T;H^1).
\end{equation}
Then for  $n+1 \le T/\delta t$ with $\delta t$ sufficiently small, we have
\begin{equation}\label{Thmerror}
\|\nabla \bm e^{n+1}\|^2+\delta t \sum_{i=0}^{n+1}(\|\Delta {\bm e}^i\|^2+\|\nabla e_p^i\|^2 ) \le C\delta t^{2k},
\end{equation}
where the constants  $C$ are dependent on $T,\, \Omega$ and  the exact solution $\bm u$, but are independent of $\delta t$.

\end{theorem}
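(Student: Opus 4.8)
The plan is to transplant the energy argument of the Stokes stability theorem to the error equations, absorbing the nonlinear contributions into the strong-norm dissipation while keeping the residual Gronwall coefficients summable uniformly in $n$. First I would set up the error equations: substituting the exact solution into \eqref{scheme_NS1}--\eqref{scheme_NS2} and subtracting yields
\[
\frac{A_k^{\beta_k}(\bm e^{n+1})}{\delta t}-\Delta B_k^{\beta_k}(\bm e^{n+1})+\nabla C_k^{\beta_k}(e_p^n)=\bm R^{n+1}-\bm N^{n+1},
\]
where $\bm R^{n+1}$ gathers the consistency errors of the generalized BDF/extrapolation formulas \eqref{TaylorA}--\eqref{TaylorC} evaluated at $t^{n+\beta_k}$, which are $O(\delta t^k)$ in the relevant norms under the regularity \eqref{NScond2}, and the nonlinear error is split as
\[
\bm N^{n+1}=C_k^{\beta_k}(\bm e^n)\cdot\nabla C_k^{\beta_k}(\bm u^n)+C_k^{\beta_k}(\bm u(t^n))\cdot\nabla C_k^{\beta_k}(\bm e^n)+\bm T_N^{n+1},
\]
with $\bm T_N^{n+1}$ an $O(\delta t^k)$ extrapolation remainder.

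Next I would test with $-\Delta C_k^{\beta_k}(\bm e^{n+1})$, exactly the multiplier used for the Stokes theorem. The linear terms are treated verbatim: the splitting \eqref{splitB} together with Lemma \ref{Gstability}, Lemma \ref{AC_lemma} and Lemma \ref{DC_lemma} produces the telescoping $G_k$- and $H_k$-forms controlling $\|\nabla\bm e^{n+1}\|^2$ and $\delta t\|\Delta\bm e^{n+1}\|^2$ together with the positive dissipation $\|\Delta C_k^{\beta_k}(\bm e^{n+1})\|^2$, while the explicit pressure error $\nabla C_k^{\beta_k}(e_p^n)$ is absorbed through \eqref{stokesp2} and Lemma \ref{stokespre}, the condition $\eta_k>\tfrac{\sqrt2}{2}$ again leaving a strictly positive dissipation coefficient. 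The truncation contributions $(\bm R^{n+1},-\Delta C_k^{\beta_k}(\bm e^{n+1}))$ and $(\bm T_N^{n+1},-\Delta C_k^{\beta_k}(\bm e^{n+1}))$ are bounded by Young's inequality and contribute $O(\delta t^{2k})$ after summation.

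The heart of the proof is the estimate of the two genuine nonlinear errors. Using the trilinear inequalities \eqref{eq:ineq2d}--\eqref{eq:ineq2} in the form that places the test function in $L^2$, I would bound, for instance,
\[
b\big(C_k^{\beta_k}(\bm e^n),C_k^{\beta_k}(\bm u^n),-\Delta C_k^{\beta_k}(\bm e^{n+1})\big)\le c\|\nabla C_k^{\beta_k}(\bm e^n)\|\,\|C_k^{\beta_k}(\bm u^n)\|_1^{1/2}\|C_k^{\beta_k}(\bm u^n)\|_2^{1/2}\,\|\Delta C_k^{\beta_k}(\bm e^{n+1})\|,
\]
and symmetrically for the second term. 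After Young's inequality the top-order factor $\|\Delta C_k^{\beta_k}(\bm e^{n+1})\|^2$ is absorbed into the available dissipation, and the remainder has the form $d_n\|\nabla\bm e^n\|^2$ (plus terms controlled by the dissipation at step $n$) with $d_n\le C\|\bm u^n\|_1\|\bm u^n\|_2$. The crucial point for a global-in-time bound valid in both 2D and 3D is that $\delta t\sum_n d_n$ stays bounded independently of $n$ and $\delta t$: by the induction hypothesis \eqref{Thmerror} holds up to step $n$, whence $\|\bm u^j\|_1\le\|\bm u(t^j)\|_1+\|\nabla\bm e^j\|\le C$, and $\|\Delta C_k^{\beta_k}(\bm e^j)\|=O(\delta t^{k-1/2})$ so that $\|\bm u^j\|_2\le C$ as well; hence $\delta t\sum_n d_n\le CT$.

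Finally I would collect these bounds, sum over $n$, and apply the discrete Gronwall Lemma \ref{Gron2} with $a_n=\|\nabla\bm e^n\|^2$, dissipation $b_n=\|\Delta\bm e^n\|^2+\|\Delta C_k^{\beta_k}(\bm e^n)\|^2$, $c_n=O(\delta t^{2k})$ and the summable $d_n$ above; the globally bounded factor $\exp(\delta t\sum_n d_n)$ then yields \eqref{Thmerror} for $\|\nabla\bm e^{n+1}\|^2$ and the dissipative sums, which, for $\delta t$ small enough, re-establishes $\|\bm u^{n+1}\|_1,\|\bm u^{n+1}\|_2\le C$ and closes the induction. The pressure error bound $\delta t\sum_i\|\nabla e_p^i\|^2\le C\delta t^{2k}$ follows afterwards by testing the error form of \eqref{scheme_NS2} with $q=e_p^i$ and using \eqref{stokesp2}, Lemma \ref{stokespre} and \eqref{eq:ineq2} once the velocity error is under control. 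I expect the main obstacle to be exactly this global-in-time, three-dimensional control of the nonlinearity: the argument hinges on retaining the full strong-norm dissipation $\|\Delta C_k^{\beta_k}(\bm e^{n+1})\|^2$ --- which the consistent-splitting structure and the multiplier Lemmas \ref{AC_lemma}--\ref{DC_lemma} make available --- so that the cubic nonlinearity can be absorbed without any CFL-type restriction, and on the induction bootstrapping the strong-norm boundedness of $\bm u^n$ that keeps the Gronwall factor finite.
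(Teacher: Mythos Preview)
Your outline is essentially the paper's proof: test the error equation with $-\Delta C_k^{\beta_k}(\bm e^{n+1})$, invoke Lemmas~\ref{AC_lemma}--\ref{DC_lemma} and the splitting \eqref{splitB} for the linear skeleton, use Lemma~\ref{stokespre} for the pressure, absorb the nonlinear terms via the trilinear inequalities, and close by Gronwall plus induction. Two points deserve attention.

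\emph{Structural difference.} The paper does not bootstrap a pointwise bound $\|\bm u^j\|_2\le C$ from the error as you do. Instead it inserts an explicit \textbf{Step~1}: under the induction hypothesis $\|\nabla\bm u^j\|\le C_0$ alone, it tests the \emph{scheme} (not the error equation) with $-\Delta C_k^{\beta_k}(\bm u^{i+1})$ and derives $\delta t\sum_i\|\Delta\bm u^i\|^2\le CT(C_0^6+C_f^2)+M_0$. The Gronwall coefficient in the error step then takes the form $\|\Delta C_k^{\beta_k}(\bm u^i)\|^2+\|\bm u(t^i)\|_2^2$, which is merely $\ell^1$-summable rather than pointwise bounded. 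Your shortcut --- extracting $\|\Delta\bm e^j\|\le C\delta t^{k-1/2}$ from the induction hypothesis and hence $\|\bm u^j\|_2\le C$ --- is valid, but you must be careful that the resulting Gronwall constant depends only on $C_0=\max_t\|\bm u(t)\|_1+1$ and $\max_t\|\bm u(t)\|_2+1$, not on the induction constant itself, so that the loop closes for $\delta t$ small. The paper's Step~1 makes this decoupling automatic by keeping the induction hypothesis minimal ($H^1$ only).

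\emph{An omission.} You write that $\nabla C_k^{\beta_k}(e_p^n)$ is absorbed ``through \eqref{stokesp2} and Lemma~\ref{stokespre}'', but \eqref{stokesp2} is the Stokes identity. In the Navier--Stokes case the pressure Poisson equation \eqref{scheme_NS2} carries the nonlinear term $-\bm u^{i}\cdot\nabla\bm u^{i}$, so the error identity reads (cf.\ \eqref{errorp})
\[
(\nabla C_k^{\beta_k}(e_p^i),\nabla q)=\big(C_k^{\beta_k}[\bm u(t^i)\!\cdot\!\nabla\bm u(t^i)-\bm u^i\!\cdot\!\nabla\bm u^i],\nabla q\big)+\big(\nabla p_s(C_k^{\beta_k}(\bm e^i)),\nabla q\big),
\]
and the first piece contributes additional Gronwall terms of the form $\|\nabla\bm e^{i-q}\|^2(\|\Delta\bm u^{i-q}\|^2+\|\bm u(t^{i-q})\|_2^2)$ (see \eqref{pnon1}). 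These are harmless under either your pointwise $H^2$ bound or the paper's Step~1, but they must be written down; otherwise the pressure error is not actually controlled during the velocity step.
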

\begin{proof}
		
Since our focus is on the error analysis for the semi-discrete scheme,  we assume $\bm f^i=\bm f(t^i) \;\forall i,$ and $\bm u^i,\, p^i,\,i\le k-1$ are  computed with proper initialization procedure such that \eqref{Thmerror} holds {\color{black} for $n \le k-1$}.

Firstly, we denote
\begin{equation}\label{C0def}
C_{H^1}:= \mathop{\rm max}\limits_{0\le t \le T} \|\nabla \bm u(\cdot,t)\|\,\, {\rm and} \,\, C_0:=C_{H^1}+1.
\end{equation}
We need to prove a uniform bound of $\|\nabla \bm u^n\|$ by induction,
\begin{equation}\label{C0}
\|\nabla \bm u^i\| \le C_0, \quad  \forall  i\le T/{\delta t},
\end{equation}
 In the following, we shall use $C$ to denote a positive constant independent of  $\delta t$, 
  which can change from one step to another and we use $\varepsilon>0$ to denote a constant which can be arbitrarily small.

Under the assumption,  \eqref{C0} certainly holds for $i=0$. Now
suppose we have
\begin{equation}\label{H1boundpre}
\|\nabla \bm u^i\| \le C_0,\,\, \forall i \le n,
\end{equation}
we shall prove below
\begin{equation}\label{H1bound}
\|\nabla \bm u^{n+1}\| \le C_0,
\end{equation}
for the same constant $C_0$.

\textbf{Step 1: Bounds for $\delta t \sum_{q=0}^i \|\Delta \bm u^q\|^2$,  $\forall i\le n$.}
Considering \eqref{scheme_NS1} at step $i+1 \le n$ and taking the inner product with $-\delta t \Delta C_k^{\beta_k}({\bm u}^{i+1})$. For the first term on the left hand side, it follows from Lemma \ref{Gstability} and Lemma \ref{AC_lemma} that there exists a symmetric positive definite matrix $G_k=(g_{lj})\in \mathbb{R}^{k\times k}$ such that
\begin{equation}\label{AC_error}
\big(A_k^{\beta_k}({\bm u}^{i+1}),-\Delta C_k^{\beta_k}({\bm u}^{i+1}) \big) \ge \sum_{l,j=1}^{k}g_{lj}(\nabla {\bm u}^{i+1+l-k},\nabla {\bm u}^{i+1+j-k})-\sum_{l,j=1}^{k}g_{lj}(\nabla {\bm u}^{i+l-k},\nabla {\bm u}^{i+j-k}).
\end{equation}

For the second term, we split $B_k^{\beta_k}({\bm u}^{i+1})$ as \eqref{splitB} and choose $\eta_k=0.71$, then we have
\begin{equation}\label{B_error}
\begin{split}
\delta t \big(-\Delta B_k^{\beta_k}({\bm u}^{i+1}),-\Delta C_k^{\beta_k}({\bm u}^{i+1}) \big)
& = 0.71\delta t \|\Delta C_k^{\beta_k}({\bm u}^{i+1})\|^2+\delta t\big(\Delta D_k^{\beta_k}({\bm u}^{i+1}),\Delta C_k^{\beta_k}({\bm u}^{i+1}) \big)\\
& + \delta t\big(\Delta F_k^{\beta_k}({\bm u}^{i+1}),\Delta C_k^{\beta_k}({\bm u}^{i+1}) \big),
\end{split}
\end{equation}
and for $\big(\Delta D_k^{\beta_k}({\bm u}^{i+1}),\Delta C_k^{\beta_k}({\bm u}^{i+1}) \big) $, thanks to Lemma \ref{Gstability} and Lemma \ref{DC_lemma}, there exists a symmetric positive definite matrix $H_k=(h_{lj})\in \mathbb{R}^{(k-1)\times (k-1)}$ such that
\begin{equation}\label{DC_error}
\big(\Delta D_k^{\beta_k}({\bm u}^{i+1}),\Delta C_k^{\beta_k}({\bm u}^{i+1}) \big) \ge \sum_{l,j=1}^{k-1}h_{lj}(\Delta {\bm u}^{i+2+l-k},\Delta {\bm u}^{i+2+j-k})-\sum_{l,j=1}^{k-1}h_{lj}(\Delta {\bm u}^{i+1+l-k},\Delta {\bm u}^{i+1+j-k}),
\end{equation}
and for $\big(\Delta F_k^{\beta_k}({\bm u}^{i+1}),\Delta C_k^{\beta_k}({\bm u}^{i+1}) \big)$, \eqref{Fk_ineq} implies
\begin{equation}\label{Fk_error}
\big(\Delta F_k^{\beta_k}({\bm u}^{i+1}),\Delta C_k^{\beta_k}({\bm u}^{i+1}) \big) \ge \kappa_k \|\Delta {\bm u}^{i+1}\|^2+U_k(\Delta {\bm u}^{i+1},...,\Delta {\bm u}^{i+3-k})-U_k(\Delta {\bm u}^{i},...,\Delta {\bm u}^{i+2-k}).
\end{equation}
For the term with $C_k^{\beta_k}(\bm u^i) \cdot \nabla C_k^{\beta_k}(\bm u^i) $, making use of \eqref{eq:ineq2d} and the Poincar\'e type inequality, we have
\begin{equation}\label{NL2D}
\begin{split}
&\big(C_k^{\beta_k}(\bm u^i) \cdot \nabla C_k^{\beta_k}(\bm u^i) , \Delta C_k^{\beta_k}({\bm u}^{i+1}) \big)  \\
\le &\Big| \big(C_k^{\beta_k}({\bm u}^i)\cdot \nabla C_k^{\beta_k}({\bm u}^i), \Delta C_k^{\beta_k}({\bm u}^{i+1}) \big)\Big| \\
\le& c\|C_k^{\beta_k}({\bm u}^i)\|_1 \|C_k^{\beta_k}({\bm u}^i)\|_1^{1/2}\|C_k^{\beta_k}({\bm u}^i)\|_2^{1/2}\|\Delta C_k^{\beta_k}({\bm u}^{i+1})\|\\
\le &C(\varepsilon)\|C_k^{\beta_k}({\bm u}^i)\|_1^2\|C_k^{\beta_k}({\bm u}^i)\|_1\|C_k^{\beta_k}({\bm u}^i)\|_2+\varepsilon\|\Delta C_k^{\beta_k}({\bm u}^{i+1})\|^2\\
\le &C(\varepsilon)\|\nabla C_k^{\beta_k}({\bm u}^i)\|^6+\varepsilon\|\Delta C_k^{\beta_k}({\bm u}^{i})\|^2+\varepsilon\|\Delta C_k^{\beta_k}({\bm u}^{i+1})\|^2,
\end{split}
\end{equation}where we used  $\|C_k^{\beta_k}({\bm u}^i)\|_2^2 \le C\|\Delta C_k^{\beta_k}({\bm u}^i)\|^2$ in the last step.

For the term with $C_k^{\beta_k}(p^i)$, we have
\begin{equation}\label{eq:pterm}
\Big|\big( \nabla C_k^{\beta_k}(p^i), -\Delta C_k^{\beta_k}({\bm u}^{i+1}) \big)\Big| \le \|\nabla C_k^{\beta_k}(p^i)\|\|\Delta C_k^{\beta_k}({\bm u}^{i+1})\|.
\end{equation}

To estimate $\|\nabla C_k^{\beta_k}(p^i)\|$, we  follow a similar procedure as in \cite{liuCPAM}: first rewriting \eqref{scheme_NS2} as
\begin{equation}\label{NSb22}
\big(\nabla p^{i}, \nabla q \big)=\big(\bm f^{i}-{\bm u}^{i} \cdot \nabla {\bm u}^{i}, \nabla q \big)+ \big( \nabla p_s({\bm u}^{i}), \nabla q \big), \quad \forall i \le n,
\end{equation}
where $p_s({\bm u}^{i})$ is the Stokes pressure associated with ${\bm u}^{i}$ and hence
\begin{equation}\label{pressure}
\big(\nabla C_k^{\beta_k}(p^i), \nabla q \big)=\big(C_k^{\beta_k}(\bm f^i)-C_k^{\beta_k}({\bm u}^i\cdot \nabla {\bm u}^i), \nabla q \big)+ \big( \nabla p_s(C_k^{\beta_k}({\bm u}^i)), \nabla q \big).
\end{equation}
Now, taking $q=C_k^{\beta_k}(p^i)$, we have
\begin{equation}
    \|\nabla C_k^{\beta_k}(p^i)\|  \le \|C_k^{\beta_k}(\bm f^i)-C_k^{\beta_k}({\bm u}^i\cdot \nabla {\bm u}^i)\|+\|\nabla p_s(C_k^{\beta_k}({\bm u}^i))\|.
\end{equation}

It follows from  the Sobolev inequality and the elliptic regularity estimate that
\begin{equation}\label{fNL2D}
\begin{split}
 \|C_k^{\beta_k}(\bm f^i)-C_k^{\beta_k}({\bm u}^i\cdot \nabla {\bm u}^i)\|^2
& \le C\sum_{q=0}^{k-1}\|{\bm f}^{i-q}\|^2+C\sum_{q=0}^{k-1}\|{\bm u}^{i-q}\cdot \nabla {\bm u}^{i-q}\|^2\\
& \le C\sum_{q=0}^{k-1}\|{\bm f}^{i-q}\|^2+C\sum_{q=0}^{k-1}\|\nabla {\bm u}^{i-q}\|^3\| \nabla {\bm u}^{i-q}\|_1\\
& \le C\sum_{q=0}^{k-1}\|{\bm f}^{i-q}\|^2+C(\varepsilon)\sum_{q=0}^{k-1}\|\nabla {\bm u}^{i-q}\|^6+\varepsilon\sum_{q=0}^{k-1}\|\Delta {\bm u}^{i-q}\|^2,
\end{split}
\end{equation}
{\color{black} where  we used the following inequality (cf. section 4 in \cite{liuCPAM}),
\begin{equation*}
\|\bm u^i\cdot \nabla \bm u^i\|^2\le \|\bm u^i\|^2_{L^6}\|\nabla \bm u^i\|^2_{L^3} \le C\|\nabla \bm u^i\|^3\|\nabla \bm u^i\|_1, \quad d=2,3.
\end{equation*}}
As a result,  by making use of Lemma \ref{stokespre}, we can estimate \eqref{eq:pterm} as
\begin{equation}
\begin{split}
& \big(\nabla C_k^{\beta_k}(p^i), -\Delta C_k^{\beta_k}({\bm u}^{i+1}) \big) \\
\le &\|\Delta C_k^{\beta_k}({\bm u}^{i+1})\|\big( \|C_k^{\beta_k}(\bm f^i)-C_k^{\beta_k}({\bm u}^i\cdot \nabla {\bm u}^i)\|+\|\nabla p_s(C_k^{\beta_k}({\bm u}^i))\| \big) \\
\le &C(\varepsilon)\|C_k^{\beta_k}(\bm f^i)-C_k^{\beta_k}({\bm u}^i\cdot \nabla {\bm u}^i)\|^2+\varepsilon\|\Delta C_k^{\beta_k}({\bm u}^{i+1})\|^2 +\frac{\gamma}{2}\|\nabla p_s(C_k^{\beta_k}({\bm u}^i))\|^2+\frac{1}{2\gamma}\|\Delta C_k^{\beta_k}({\bm u}^{i+1})\|^2\\
\le & C(\varepsilon) \sum_{q=0}^{k-1}\big(\|{\bm f}^{i-q}\|^2+\|\nabla {\bm u}^{i-q}\|^6 \big)+\varepsilon( \sum_{q=0}^{k-1}\|\Delta {\bm u}^{i-q}\|^2+\|\Delta C_k^{\beta_k}({\bm u}^{i+1})\|^2)\\
& +\gamma(\frac{1}{4}+\frac{\varepsilon}{2})\|\Delta C_k^{\beta_k}({\bm u}^i)\|^2+C(\varepsilon)\|\nabla C_k^{\beta_k}({\bm u}^i)\|^2+\frac{1}{2\gamma}\|\Delta C_k^{\beta_k}({\bm u}^{i+1})\|^2.
\end{split}
\end{equation}
with $\gamma$ can be any positive number.

Finally, for the right hand side of \eqref{scheme_NS1}, we have
\begin{equation}\label{rhs1}
\big( \bm f^{i+\beta_k}, -\Delta C_k^{\beta_k}({\bm u}^{i+1}) \big ) \le C(\varepsilon) \|\bm f^{i+\beta_k}\|^2+\varepsilon\|\Delta C_k^{\beta_k}({\bm u}^{i+1})\|^2.
\end{equation}

Combining \eqref{AC_error} to \eqref{rhs1} and choosing $\gamma=\sqrt{2}$ as before, we obtain
\begin{small}
\begin{equation}\label{tosum}
\begin{split}
& \sum_{l,j=1}^{k}g_{lj}(\nabla {\bm u}^{i+1+l-k},\nabla {\bm u}^{i+1+j-k})-\sum_{l,j=1}^{k}g_{lj}(\nabla {\bm u}^{i+l-k},\nabla {\bm u}^{i+j-k})+\delta t\sum_{l,j=1}^{k-1}h_{lj}(\Delta {\bm u}^{i+2+l-k},\Delta {\bm u}^{i+2+j-k})\\
&-\delta t\sum_{l,j=1}^{k-1}h_{lj}(\Delta {\bm u}^{i+1+l-k},\Delta {\bm u}^{i+1+j-k})+0.71 \delta t\|\Delta C_k^{\beta_k}({\bm u}^{i+1}) \|^2 +\kappa_k \delta t\|\Delta {\bm u}^{i+1}\|^2\\
& +\delta tU_k(\Delta {\bm u}^{i+1},...,\Delta {\bm u}^{i+3-k})-\delta tU_k(\Delta {\bm u}^{i},...,\Delta {\bm u}^{i+2-k})\\
 \le &C(\varepsilon)\delta t \Big(\sum_{q=0}^{k-1}\big(\|{\bm f}^{i-q}\|^2+\|\nabla {\bm u}^{i-q}\|^6 \big)+\|\bm f^{i+\beta_k}\|^2 +\|\nabla C_k^{\beta_k}({\bm u}^i)\|^2+\|\nabla C_k^{\beta_k}({\bm u}^i)\|^6\Big)\\
&+\delta t \varepsilon( \sum_{q=0}^{k-1}\|\Delta {\bm u}^{i-q}\|^2+\|\Delta C_k^{\beta_k}({\bm u}^{i})\|^2+\|\Delta C_k^{\beta_k}({\bm u}^{i+1})\|^2)
+(\frac{\sqrt{2}}{4}+\frac{\sqrt{2}\varepsilon}{2})\delta t \|\Delta C_k^{\beta_k}({\bm u}^i)\|^2\\
&+\frac{\sqrt{2}}{4}\delta t\|\Delta C_k^{\beta_k}({\bm u}^{i+1})\|^2.
\end{split}
\end{equation}
\end{small}


Now, we can choose $\varepsilon$ small enough such that there exists $\rho>0$ such that
\begin{equation}\label{epsilon}
0.71  -( \frac{\sqrt{2}}{2}+\frac{\sqrt{2}\varepsilon}{2}+2\varepsilon) \ge \rho>0 \quad \text{and} \quad \kappa_k- k\varepsilon \ge \rho>0.
\end{equation}
Then taking the sum on \eqref{tosum} for $i$ from $k-1$ to $m-1$ with $m \le n$ and dropping some unnecessary terms, we can obtain:
\begin{small}
\begin{equation}\label{eq:sum}
\begin{split}
& \lambda_k^g\|\nabla {\bm u}^{m}\|^2+\rho \delta t \sum_{i=k}^{m}(\|\Delta C_k^{\beta_k}({\bm u}^i)\|^2+\|\Delta {\bm u}^{i}\|^2) \\
\le &C\delta t \sum_{i=k-1}^{m-1}\big(\|\nabla {\bm u}^i\|^6+\|\nabla C_{k}^{\beta_k}({\bm u}^i)\|^{\color{black}6}\big)+C\delta t \sum_{i=k-1}^{m-1}\|\nabla C_k^{\beta_k}({\bm u}^i)\|^2 +C\delta t\sum_{i=k-1}^{m-1} (\|\bm f^i\|^2+\|\bm f^{i+\beta_k}\|^2)+M_0\\
\le & C\delta t \sum_{i=0}^{m-1}\|\nabla {\bm u}^i\|^6+C\delta t \sum_{i=0}^{m-1}\|\nabla {\bm u}^i\|^2+CTC_f^2+M_0,\quad \forall m \le n,
\end{split}
\end{equation}
\end{small}where $\lambda_k^g>0$ is the smallest eigenvalue of $G_k=(g_{lj})$, $M_0$ is a constant only depends on the data from initial $k-1$ steps and we used $\|\bm f(\cdot,t)\| \le C_f, \, \forall t \in[0, T]$. Next, noting that $\|\nabla {\bm u}^i\| \le C_0,\,\forall i \le n$ under the induction assumption {\color{black} and $C_0>1$ from \eqref{C0def}}, we can obtain from \eqref{eq:sum}:
\begin{equation}\label{H2bound}
\|\nabla {\bm u}^{m}\|^2+\delta t \sum_{i=k}^{m}(\|\Delta C_k^{\beta_k}({\bm u}^i)\|^2+\|\Delta {\bm u}^{i}\|^2) \le CT(C_0^6+C_f^2)+M_0, \quad \forall m\le n.
\end{equation}

\textbf{Step 2: Error estimate for $\|\nabla {\bm e}^{n+1}\|$.}
From \eqref{NS} and \eqref{scheme_NS}, we can write down the error equation for ${\bm u}^{i+1}$ and $p^{i+1}$ as
\begin{equation}\label{NSerror}
\begin{split}
& A_k^{\beta_k}({\bm e}^{i+1})-\delta t\Delta B_k^{\beta_k}({\bm e}^{i+1})+\delta t(C_k^{\beta_k}({\bm u}^i) \cdot \nabla C_k^{\beta_k}({\bm u}^i)-C_k^{\beta_k}[{\bm u}(t^{i})]\cdot \nabla C_k^{\beta_k}[{\bm u}(t^{i})])+\delta t\nabla C_k^{\beta_k}({e}_p^i)\\
&=\delta t P_k^i+\delta tQ_k^i+R_k^i+\delta tS_k^i,
\end{split}
\end{equation}
where $P_k^i$, $Q_k^i$, $R_k^i$ $S_k^i$ are given by
\begin{equation}\label{P}
P_k^i= \nabla p(t^{i+\beta_k})-\nabla C_k^{\beta_k}(p(t^i))=\frac{1}{(k-1)!}\sum_{q=0}^{k-1}c_{k,q}(\beta_k)\int_{t^{i+1+q-k}}^{t^{i+\beta_k}}(t^{i+1+q-k}-s)^{k-1}\nabla \frac{\partial^k p}{\partial t^k}(s)ds,
\end{equation}
with $c_{k,q}(\beta_k)$ defined in \eqref{TaylorC} and
\begin{equation}\label{Q}
Q_k^i=- \Delta \bm u (t^{i+\beta_k})+\Delta B_k^{\beta_k} (\bm u(t^{i+1}))
=\frac{-1}{(k-1)!}\sum_{q=0}^{k-1}b_{k,q}(\beta_k)\int_{t^{i+2+q-k}}^{t^{i+\beta_k}}(t^{i+2+q-k}-s)^{k-1}\Delta  \frac{\partial^k \bm u}{\partial t^k}(s)ds,
\end{equation}
\begin{equation}\label{R}
\begin{split}
R_k^i=\delta t \bm u_t(t^{i+\beta_k})-A_k^{\beta_k}(\bm u(t^{i+1}))=\frac{1}{k!}\sum_{q=0}^{k}a_{k,q}(\beta_k)\int_{t^{i+1+q-k}}^{t^{i+\beta_k}}(t^{i+1+q-k}-s)^{k} \frac{\partial^{k+1} \bm u}{\partial t^{k+1}}(s)ds,
\end{split}
\end{equation}
and
\begin{equation}\label{S}
\begin{split}
S_k^i& =\bm u(t^{i+\beta_{k}})\cdot \nabla \bm u(t^{i+\beta_k})-C_k^{\beta_k}[\bm u(t^i)] \cdot \nabla C_k^{\beta_k}[\bm u(t^i)]\\
&=\bm u(t^{i+\beta_k})\cdot \nabla (\bm u(t^{i+\beta_k})-C_k^{\beta_k}[{\bm u}(t^i)])-(C_k^{\beta_k}[{\bm u}(t^i)]-\bm u(t^{i+\beta_k})) \cdot \nabla C_k^{\beta_k}[\bm u(t^i)].
\end{split}
\end{equation}
Next, we take the inner product of  \eqref{NSerror} with $-\Delta C_k^{\beta_k}({\bm e}^{i+1})$. For the first term on the left hand side, same as \eqref{AC_error}, we have
\begin{equation}\label{AC_error2}
\big(A_k^{\beta_k}({\bm e}^{i+1}),-\Delta C_k^{\beta_k}({\bm e}^{i+1}) \big) \ge \sum_{l,j=1}^{k}g_{lj}(\nabla {\bm e}^{i+1+l-k},\nabla {\bm e}^{i+1+j-k})-\sum_{l,j=1}^{k}g_{lj}(\nabla {\bm e}^{i+l-k},\nabla {\bm e}^{i+j-k}).
\end{equation}
We handle the term  with $B_k^{\beta_k}({\bm e}^{i+1})$ similarly  as in  \eqref{B_error}-\eqref{Fk_error} to obtain,
\begin{equation}\label{BC_error}
\begin{split}
&\delta t\big(-\Delta B_k^{\beta_k}({\bm e}^{i+1}) ,-\Delta C_k^{\beta_k}({\bm e}^{i+1}) \big) \\
 \ge & 0.71\delta t\|\Delta C_k^{\beta_k}({\bm e}^{i+1})\|{\color{black}^2}+\kappa_k \delta t\|\Delta {\bm e}^{i+1}\|^2 + \delta t\sum_{l,j=1}^{k-1}h_{lj}(\Delta {\bm e}^{i+2+l-k},\Delta {\bm e}^{i+2+j-k})\\
&-\delta t\sum_{l,j=1}^{k-1}h_{lj}(\Delta {\bm e}^{i+1+l-k},\Delta {\bm e}^{i+1+j-k})+\delta tU_k(\Delta {\bm e}^{i+1},...,\Delta {\bm e}^{i+3-k})-\delta t U_k(\Delta {\bm e}^{i},...,\Delta {\bm e}^{i+2-k}).
\end{split}
\end{equation}
For the third term on the left hand side of \eqref{NSerror}, we rewrite it as
\begin{equation}\label{eq:split}
\begin{split}
& C_k^{\beta_k}({\bm u}^i) \cdot \nabla C_k^{\beta_k}({\bm u}^i)-C_k^{\beta_k}[{\bm u}(t^{i})]\cdot \nabla C_k^{\beta_k}[{\bm u}(t^{i})] \\ =&C_k^{\beta_k}({\bm u}^i) \cdot \nabla C_k^{\beta_k}({\bm u}^i)-C_k^{\beta_k}[{\bm u}(t^{i})] \cdot \nabla C_k^{\beta_k}({\bm u}^i)+C_k^{\beta_k}[{\bm u}(t^{i})]\cdot \nabla C_k^{\beta_k}({\bm u}^i)-C_k^{\beta_k}[{\bm u}(t^{i})]\cdot \nabla C_k^{\beta_k}[{\bm u}(t^{i})]\\
=& C_k^{\beta_k}({\bm e}^i) \cdot \nabla C_k^{\beta_k}({\bm u}^i)+ C_k^{\beta_k}[{\bm u}(t^{i})] \cdot \nabla C_k^{\beta_k}({\bm e}^i).
\end{split}
\end{equation}
Therefore, it follows from \eqref{eq:ineq2} that
\begin{equation}
\begin{split}
&\big(C_k^{\beta_k}({\bm u}^i) \cdot \nabla C_k^{\beta_k}({\bm u}^i)-C_k^{\beta_k}[{\bm u}(t^{i})]\cdot \nabla C_k^{\beta_k}[{\bm u}(t^{i})], -\Delta C_k^{\beta_k}({\bm e}^{i+1}) \big)\\
=&\big(C_k^{\beta_k}({\bm e}^i) \cdot \nabla C_k^{\beta_k}({\bm u}^i),  -\Delta C_k^{\beta_k}({\bm e}^{i+1})  \big)+\big(C_k^{\beta_k}[{\bm u}(t^{i})] \cdot \nabla C_k^{\beta_k}({\bm e}^i), -\Delta C_k^{\beta_k}({\bm e}^{i+1})  \big)\\
 \le& C\|\nabla C_k^{\beta_k}({\bm e}^i)\|\|C_k^{\beta_k}({\bm u}^i)\|_2\|\Delta C_k^{\beta_k}({\bm e}^{i+1})\|+C\|C_k^{\beta_k}[{\bm u}(t^{i})]\|_2\|\nabla C_k^{\beta_k}({\bm e}^i)\|\|\Delta C_k^{\beta_k}({\bm e}^{i+1})\|\\
\le& C(\varepsilon)\|\nabla C_k^{\beta_k}({\bm e}^i)\|^2\|\Delta C_k^{\beta_k}({\bm u}^i)\|^2+C(\varepsilon)\|C_k^{\beta_k}[{\bm u}(t^{i})]\|_2^2\|\nabla C_k^{\beta_k}({\bm e}^i)\|^2+\varepsilon\|\Delta C_k^{\beta_k}({\bm e}^{i+1})\|^2.\\
\end{split}
\end{equation}

For the term with $C_k^{\beta_k}({e}_p^i)$, we have
\begin{equation}\label{ep1}
\big(\nabla C_k^{\beta_k}({e}_p^i), -\Delta  C_k^{\beta_k}({\bm e}^{i+1}) \big) \le  \|\nabla C_k^{\beta_k}({e}_p^i)\|\|\Delta  C_k^{\beta_k}({\bm e}^{i+1})\|.
\end{equation}
To estimate $\|\nabla C_k^{\beta_k}({e}_p^i)\|$, same as in the last step, we make use of the Stokes pressure. First, from \eqref{scheme_NS2}, the error equation for $e_p^i$ can be rewritten as
\begin{equation}\label{errorp0}
(\nabla {e}_p^{i}, \nabla q)=\big({\bm u}(t^{i}) \cdot \nabla {\bm u}(t^{i})-{\bm u}^{i}\cdot \nabla {\bm u}^{i}, \nabla q \big)+ (\nabla p_s( {\bm e}^{i}), \nabla q),
\end{equation}
and hence,
\begin{equation}\label{errorp}
(\nabla C_k^{\beta_k}({e}_p^i), \nabla q)=\Big(C_k^{\beta_k}\big({\bm u}(t^{i}) \cdot \nabla {\bm u}(t^{i})-{\bm u}^{i}\cdot \nabla {\bm u}^{i}\big), \nabla q\Big)+ (\nabla p_s(C_k^{\beta_k}({\bm e}^{i})), \nabla q),
\end{equation}
where $p_s(C_k^{\beta_k}({\bm e}^{i}))$ is the Stokes pressure associated with $C_k^{\beta_k}({\bm e}^{i})$. We let $q=C_k^{\beta_k}({e}_p^i)$ in the above to obtain
\begin{equation}\label{ep}
\|\nabla C_k^{\beta_k}({e}_p^i)\| \le \|C_k^{\beta_k}({\bm u}(t^{i}) \cdot \nabla {\bm u}(t^{i})-{\bm u}^{i}\cdot \nabla {\bm u}^{i})\|+\|\nabla p_s(C_k^{\beta_k}({\bm e}^{i}))\|.
\end{equation}
Similarly as in \eqref{eq:split}, we  rewrite
\begin{equation}
{\bm u}(t^{i}) \cdot \nabla {\bm u}(t^{i})-{\bm u}^{i}\cdot \nabla {\bm u}^{i}=- {\bm e}^i \cdot \nabla {\bm u}^i-\bm u(t^i) \cdot \nabla {\bm e}^i,
\end{equation}
then it follows from the Sobolev inequality and the Poincar\'e type inequality that
\begin{equation}\label{pnon1}
 \|C_k^{\beta_k}({\bm u}(t^{i}) \cdot \nabla {\bm u}(t^{i})-{\bm u}^{i}\cdot \nabla {\bm u}^{i})\|^2 \le C \sum_{q=0}^{k-1}(\|\nabla {\bm e}^{i-q}\|^2\|\Delta {\bm u}^{i-q}\|^2+\|\bm u(t^{i-q})\|_2^2\|\nabla {\bm e}^{i-q}\|^2).
\end{equation}
Now, combining \eqref{ep1} to \eqref{pnon1} and making use of Lemma \ref{stokespre} for the Stokes pressure, we can bound the term with $C_k^{\beta_k}({e}_p^i)$ as
\begin{equation}\label{ptermbound}
\begin{split}
& \big(\nabla C_k^{\beta_k}({e}_p^i), -\Delta  C_k^{\beta_k}({\bm e}^{i+1}) \big) \\
\le &\|C_k^{\beta_k}\big({\bm u}(t^{i}) \cdot \nabla {\bm u}(t^{i})-{\bm u}^{i}\cdot \nabla {\bm u}^{i}\big)\|\|\Delta  C_k^{\beta_k}({\bm e}^{i+1})\|+  \|\nabla p_s(C_k^{\beta_k}({\bm e}^{i}))\|\|\Delta  C_k^{\beta_k}({\bm e}^{i+1})\| \\
\le& C(\varepsilon)\|C_k^{\beta_k}({\bm u}(t^{i}) \cdot \nabla {\bm u}(t^{i})-{\bm u}^{i}\cdot \nabla {\bm u}^{i})\|^2+\varepsilon\|\Delta C_k^{\beta_k}({\bm e}^{i+1})\|^2+ \frac{\gamma}{2}\|\nabla p_s(C_k^{\beta_k}({\bm e}^{i}))\|^2+\frac{1}{2\gamma}\|\Delta  C_k^{\beta_k}({\bm e}^{i+1})\|^2\\
 \le& C(\varepsilon)\sum_{q=0}^{k-1}\|\nabla {\bm e}^{i-q}\|^2(\|\Delta {\bm u}^{i-q}\|^2+\|\bm u(t^{i-q})\|_2^2)
 +(\varepsilon+\frac{1}{2\gamma})\|\Delta C_k^{\beta_k}({\bm e}^{i+1})\|^2+\gamma(\frac{1}{4}+\frac{\varepsilon}{2})\|\Delta C_k^{\beta_k}({\bm e}^{i})\|^2\\
 &+C(\varepsilon)\|\nabla C_k^{\beta_k}({\bm e}^{i})\|^2.
\end{split}
\end{equation}
For the right hand side of \eqref{NSerror}, we derive from \eqref{P}-\eqref{R} that
\begin{equation}\label{Pbound}
\begin{split}
(P_k^i, -\Delta  C_k^{\beta_k}({\bm e}^{i+1})) & \le C(\varepsilon)\|P_k^i\|^2+\varepsilon\|\Delta  C_k^{\beta_k}({\bm e}^{i+1})\|^2 \\
& \le C(\varepsilon)\delta t^{2k-1} \int_{t^{i+1-k}}^{t^{i+\beta_k}}\Big{\|}\nabla \frac{\partial ^k p}{\partial t^k}(s)\Big{\|}^2 ds+\varepsilon\|\Delta  C_k^{\beta_k}({\bm e}^{i+1})\|^2,
\end{split}
\end{equation}
and similarly,
\begin{equation}\label{Qbound}
(Q_k^i, -\Delta  C_k^{\beta_k}({\bm e}^{i+1})) \le C(\varepsilon)\delta t^{2k-1}\int_{t^{i+2-k}}^{t^{i+\beta_k}}\Big{\|}\Delta \frac{\partial ^k \bm u}{\partial t^k}(s)\Big{\|}^2 ds+\varepsilon\|\Delta C_k^{\beta_k}({\bm e}^{i+1})\|^2,
\end{equation}
\begin{equation}\label{Rbound}
\begin{split}
(R_k^i, -\Delta  C_k^{\beta_k}({\bm e}^{i+1})) & \le \frac{C(\varepsilon)}{\delta t}\|R_k^i\|^2+\varepsilon \delta t\| \Delta C_k^{\beta_k}({\bm e}^{i+1})\|^2 \\
&\le  C(\varepsilon)\delta t^{2k} \int_{t^{i+1-k}}^{t^{i+\beta_k}}\Big{\|}\frac{\partial ^{k+1} \bm u}{\partial t^{k+1}}\Big{\|}^2 ds+\varepsilon\delta t \|\Delta C_k^{\beta_k}({\bm e}^{i+1})\|^2.
\end{split}
\end{equation}
For the term with $S_k^i$, it follows from \eqref{eq:ineq2} and \eqref{S} that
\begin{equation}\label{Sbound}
\begin{split}
&(S_k^i, -\Delta  C_k^{\beta_k}({\bm e}^{i+1})) \\
 \le &C \|\bm u(t^{i+\beta_k})\|_2\|\nabla(\bm u(t^{i+\beta_k})-C_k^{\beta_k}[{\bm u}(t^i)]\|\|\Delta C_k^{\beta_k}({\bm e}^{i+1})\|\\
& + C \|C_k^{\beta_k}[{\bm u}(t^{i})]\|_2\|\nabla(\bm u(t^{i+\beta_k})-C_k^{\beta_k}[\bm u(t^i)])\|\|\Delta C_k^{\beta_k}({\bm e}^{i+1})\|\\
\le& C(\varepsilon)(\|\bm u(t^{i+\beta_k})\|_2^2+\|C_k^{\beta_k}[\bm u(t^i)]\|_2^2)\|\nabla(\bm u(t^{i+\beta_k})-C_k^{\beta_k}[\bm u(t^i)]\|^2+\varepsilon \|\Delta C_k^{\beta_k}({\bm e}^{i+1})\|^2\\
\le &C(\varepsilon)\delta t^{2k-1}\int_{t^{i+1-k}}^{t^{i+\beta_k}}\Big{\|} \nabla \frac{\partial ^k \bm u}{\partial t^k}(s)\Big{\|}^2 ds+\varepsilon \|\Delta C_k^{\beta_k}({\bm e}^{i+1})\|^2.
\end{split}
\end{equation}
Now, we combine \eqref{AC_error2} to \eqref{Sbound}, choose $\gamma=\sqrt{2}$ and drop some unnecessary terms to obtain
\begin{small}
\begin{equation}\label{tosum2}
\begin{split}
& \sum_{l,j=1}^{k}g_{lj}(\nabla {\bm e}^{i+1+l-k},\nabla {\bm e}^{i+1+j-k})-\sum_{l,j=1}^{k}g_{lj}(\nabla {\bm e}^{i+l-k},\nabla {\bm e}^{i+j-k})+\delta t\sum_{l,j=1}^{k-1}h_{lj}(\Delta {\bm e}^{i+2+l-k},\Delta {\bm e}^{i+2+j-k})\\
&-\delta t\sum_{l,j=1}^{k-1}h_{lj}(\Delta {\bm e}^{i+1+l-k},\Delta {\bm e}^{i+1+j-k})+0.71 \delta t\|\Delta C_k^{\beta_k}({\bm e}^{i+1}) \|^2 +\kappa_k \delta t\|\Delta {\bm e}^{n+1}\|^2\\
& +\delta tU_k(\Delta {\bm e}^{i+1},...,\Delta {\bm e}^{i+3-k})-\delta tU_k(\Delta {\bm e}^{i},...,\Delta {\bm e}^{i+2-k})\\
\le &C(\varepsilon)\delta t\|\nabla C_k^{\beta_k}({\bm e}^i)\|^2(\|\Delta C_k^{\beta_k}({\bm u}^i)\|^2+\|C_k^{\beta_k}[{\bm u} (t^i)]\|_2^2+1)+(\varepsilon+\frac{\sqrt{2}}{4})\delta t\|\Delta C_k^{\beta_k}({\bm e}^{i+1})\|^2\\
&+(\frac{\sqrt{2}}{4}+\frac{\sqrt{2}\varepsilon}{2})\delta t\|\Delta C_k^{\beta_k}({\bm e}^i)\|^2
+C(\varepsilon)\delta t\sum_{q=0}^{k-1}\|\nabla {\bm e}^{i-q}\|^2(\|\Delta {\bm u}^{i-q}\|^2+\|\bm u(t^{i-q})\|_2^2)\\
& + C(\varepsilon)\delta t^{2k} \int_{t^{i+1-k}}^{t^{i+\beta_k}}\Big(\Big{\|}\nabla \frac{\partial ^k p}{\partial t^k}(s)\Big{\|}^2+\Big{\|}\Delta \frac{\partial ^k \bm u}{\partial t^k}(s)\Big{\|}^2+\Big{\|}\frac{\partial ^{k+1} \bm u}{\partial t^{k+1}}\Big{\|}^2+\Big{\|} \nabla \frac{\partial ^k \bm u}{\partial t^k}(s)\Big{\|}^2\Big)ds.
\end{split}
\end{equation}
\end{small}
Thanks to \eqref{epsilon}, we have
\begin{equation}
0.71-\frac{\sqrt{2}}{2}-\varepsilon-\frac{\sqrt{2}\varepsilon}{2}  >0.
\end{equation}
Taking the sum of \eqref{tosum2} for $i$ from $k-1$ to $n$.  Under the assumption \eqref{NScond2} on the exact solution and the initial steps $\bm u^i,\, \forall i \le k-1$, we can obtain the following after dropping some unnecessary terms:
\begin{small}
\begin{equation}\label{eq:sum2}
\begin{split}
&\lambda_k^g\|\nabla {\bm e}^{n+1}\|^2+\kappa_k\delta t \sum_{i=k}^{n+1}\|\Delta {\bm e}^{i}\|^2\\
 \le & C\delta t \sum_{i=k-1}^{n}\|\nabla {\bm e}^i\|^2(\|\Delta {\bm u}^i\|^2+\|{\bm u}(t^i)\|_2^2)+C\delta t \sum_{i=k-1}^{n}\|\nabla C_k^{\beta_k}({\bm e}^i)\|^2(\|\Delta C_k^{\beta_k}({\bm u}^i)\|^2+\|C_k^{\beta_k}[{\bm u}(t^i)]\|_2^2+1) +CT\delta t^{2k}\\
\le& C\delta t\sum_{i=k-1}^{n}\|\nabla {\bm e}^i\|^2 \Big(\|\Delta {\bm u}^i\|^2+\|{\bm u}(t^i)\|_2^2+\sum_{q=0}^{{\rm min}\{k-1,n-i\}}(\|\Delta C_k^{\beta_k}({\bm u}^{i+q})\|^2+\|C_k^{\beta_k}[{\bm u}(t^{i+q})]\|_2^2+1) \Big)+CT\delta t^{2k}
\end{split}
\end{equation}
\end{small}where $\lambda_k^g>0$ is the smallest eigenvalue of $G_k=(g_{lj})$.
We can then derive from \eqref{H2bound} and assumptions on the exact solution that there exists $C_1>1$, which is independent of $C_0$ and $\delta t$ such that
\begin{subequations}\label{H2bound2}
\begin{align}
\delta t \sum_{i=0}^{m}\|\Delta {\bm u}^i\|^2,\,\delta t \sum_{i=k-1}^{m}\|\Delta C_k^{\beta_k}({\bm u}^i)\|^2 & \le  C_1(C_0^6+1),\, \forall m \le n;\\
\|C_k^{\beta_k}[{\bm u}(t^i)]\|_2^2,\,\|{\bm u}(t^i)\|_2^2 &\le C_1,\, \forall t\le T.
\end{align}
\end{subequations}
Then noting that the assumption on the initial steps $\bm u^i,\,  i=0,..., k-1$ and applying the Gronwall Lemma \ref{Gron2} on \eqref{eq:sum2}, we obtain
\begin{equation}\label{ebarerror}
\|\nabla {\bm e}^{n+1}\|^2+\delta t \sum_{i=0}^{n+1}\|\Delta {\bm e}^{i}\|^2 \le CT\delta t^{2k}\exp(CC_1(C_0^6+1)+T)=: C_{\bm u}^2 \delta t^{2k},
\end{equation}
where $C_{\bm u}$ is a constant independent of $\delta t$. Moreover, since
\begin{equation}
\|\nabla \bm u^{n+1}\| \le \|\nabla \bm u(\cdot, t^{n+1})\|+\|\nabla {\bm e}^{n+1}\|,
\end{equation}
it follows from the definition of $C_0$ in \eqref{C0def} that \eqref{H1bound} is obviously true if we choose
\begin{equation}
\delta t \le {\rm min}\{1, \frac{1}{C_{\bm u}}\},
\end{equation}
and hence the induction process is completed.

\textbf{Step 3: Error estimate for the pressure}. Let $q=e_p^i$ in \eqref{errorp0}, by using  Lemma \ref{stokespre} and the Sobolev inequality, we have
\begin{equation}\label{errorp2}
\begin{split}
\|\nabla e_p^i \|^2 & \le 2\|\bm u(t^i) \cdot \nabla \bm u(t^i)-{\bm u}^i\cdot \nabla {\bm u}^i\|^2+2\|\nabla p_s({\bm e}^i)\|^2\\
& \le C\|\nabla {\bm e}^i\|^2(\|\Delta {\bm u}^i\|^2+\|\bm u(t^i)\|_2^2)+2\|\Delta {\bm e}^i\|^2+C\|\nabla {\bm e}^i\|^2.
\end{split}
\end{equation}
Now, take the sum on \eqref{errorp2}, then \eqref{H2bound2} and \eqref{ebarerror} together imply
\begin{equation}\label{errorp3}
\delta t \sum_{i=0}^{n+1} \|\nabla e_p^i\|^2 \le C\delta t^{2k}.
\end{equation}
Finally, we complete the proof by combining \eqref{ebarerror} and \eqref{errorp3}.

 \end{proof}

\section{Numerical validation and concluding remarks}
We  provide two numerical examples to show that  (i) the higher-order consistent splitting  schemes based on the usual BDF are not unconditionally stable but the new schemes with suitable $\beta$ are, and (ii)  the new schemes with suitable $\beta$ achieve the expected  convergence rates,  followed by some concluding remarks.
\subsection{Numerical results}

\textit{Example 1.} In the first example, we first consider the stokes problem (in the absence of $\bm f$ and the nonlinear term in \eqref{NS}) in $\Omega=(-1,1)\times (-1,1)$ with no-slip boundary condition, and the initial conditions are given as
\begin{subequations}\label{initial}
	\begin{align}
		& u_1(x,y,0)=\sin(2\pi y)\sin^2(\pi x);\\
		& u_2(x,y,0)=-\sin(2\pi x)\sin^2(\pi y).
	\end{align}
\end{subequations}
We set $\nu=0.005$ and use the third- and fourth- order version of \eqref{scheme_stoke}. We use the Legendre-Galerkin method  \cite{STWbook} with $Nx=Ny=128$ modes in space. In Figure \ref{fig:stoke}, we plot the energy evolution obtained from the third- and fourth- order schemes. In both cases, we  observe that the  high-order schemes based on the usual BDF (with $\beta=1$) are unstable even with an extremely small time step ($\delta t=0.0005$ for the third order scheme and $\delta t=0.0002$ for the fourth order scheme), while we can obtain  correct solutions with large time step $\delta t=0.05$ by choosing suitable $\beta$ as specified in previous sections.  We also observe that  these schemes are still stable with  $\delta t=1 $ although the solutions are no longer correct with such large time step.
\begin{figure}[!htbp]
	\centering
	\subfigure{ \includegraphics[scale=.55]{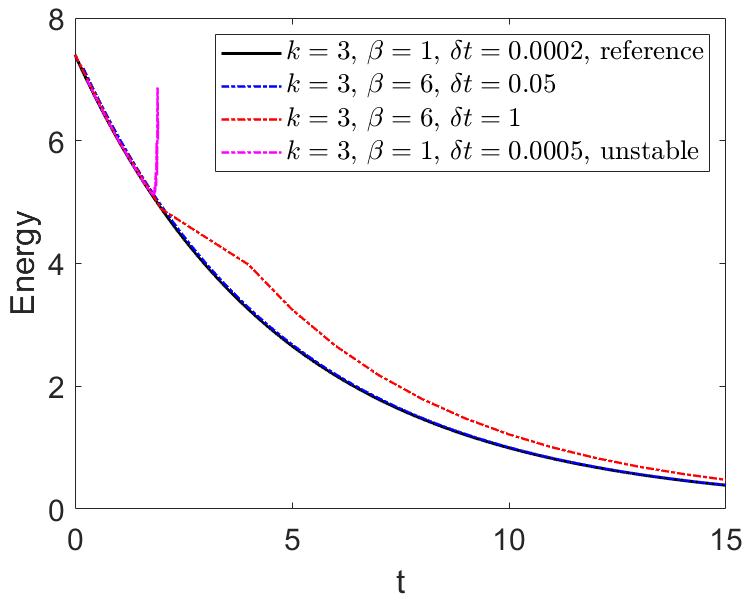}}
	\subfigure{ \includegraphics[scale=.55]{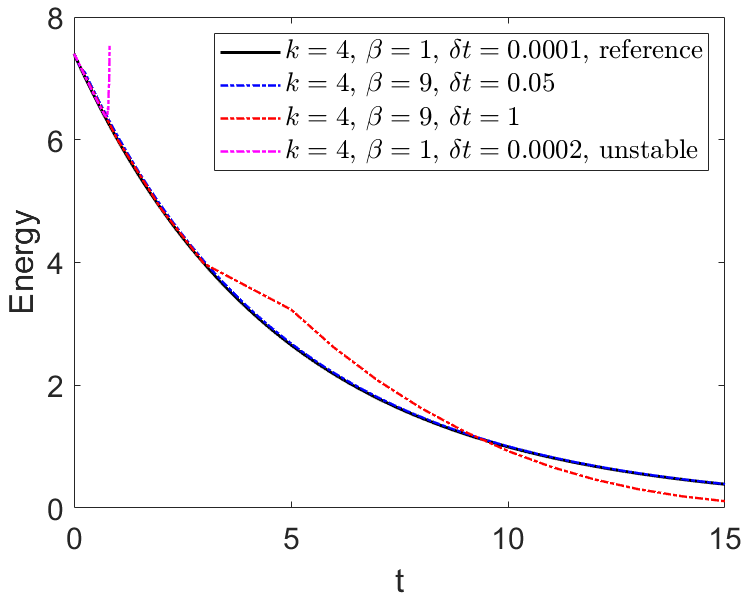}}
	\caption{ Energy evolution for the stokes problem. Left: third order scheme; Right: fourth order scheme. }\label{fig:stoke}
\end{figure}

Next, we consider the Navier-Stokes equation \eqref{NS} with $\nu=0.005$ and the initial conditions are still chosen as \eqref{initial}. We adopt the third- and fourth- order version of \eqref{scheme_NS} and we use the Spectral-Galerkin method with $Nx=Ny=128$ modes in space. In Figure \ref{fig:NS}, we plot the energy evolution obtained from the third- and fourth- order schemes, the reference solution is generated by the fourth-order scheme with $\beta=9,\, Nx=Ny=192,\,\delta t=0.0002$. We observe from Figure that with the same time step $\delta t=0.0005$, the usual BDF3 and BDF4 schemes (with $\beta=1$) are unstable. On the other hand, we can obtain stable and correct solutions using the new third-order (resp. fourth-order) schemes with  $\beta=6$ (resp. $\beta=9$). In Figure \ref{fig:NS}, we plot some snapshots of the vorticity contours at different times.

\begin{figure}[!htbp]
 \centering
 \subfigure[Energy evolution]{ \includegraphics[scale=.5]{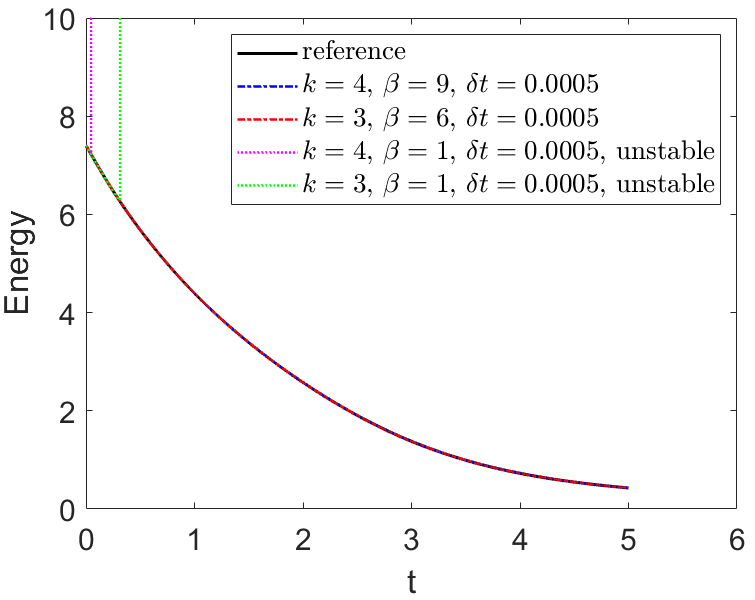}}
  \subfigure[T=0.01]{ \includegraphics[scale=.5]{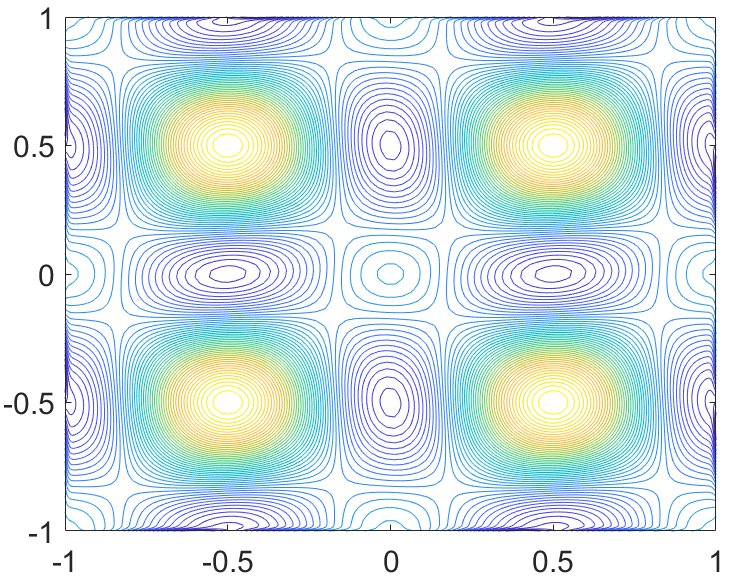}}\\
   \subfigure[T=3]{ \includegraphics[scale=.5]{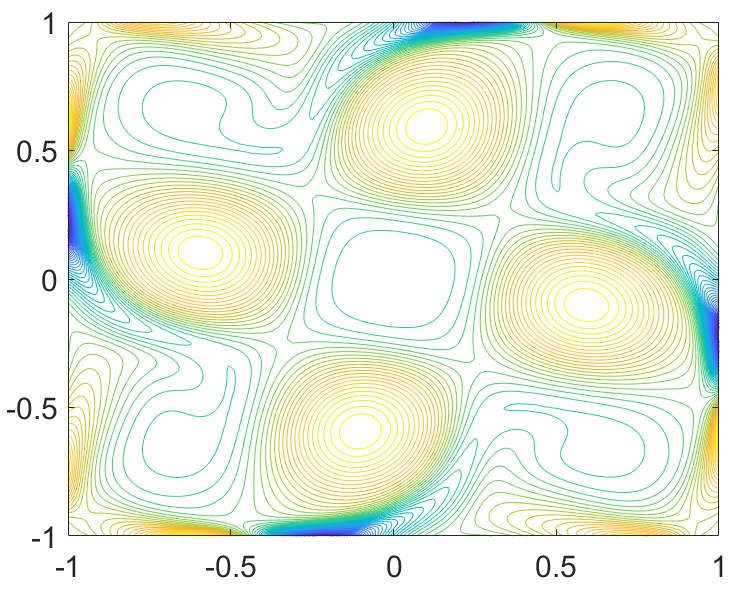}}
  \subfigure[T=5]{ \includegraphics[scale=.5]{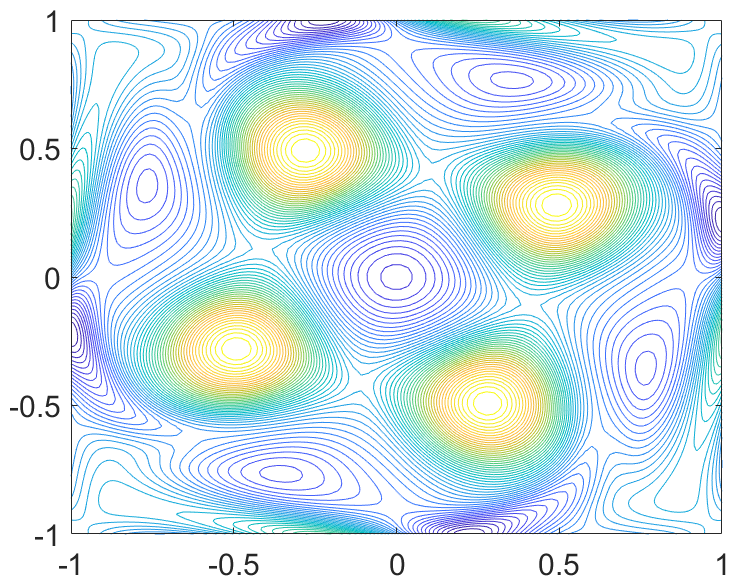}}\\
  \caption{ Energy evolution for the Navier-Stokes equations and snapshots of the vorticity contours at T=0.01, 3, 5.}\label{fig:NS}
 \end{figure}

 \textit{Example 2.} In the second example, we validate the convergence order of the new schemes. Consider the Navier-Stokes equations \eqref{NS} in $\Omega=(-1,1)\times (-1,1)$ with the exact solutions given by
 \begin{align*}
 	& u_1(x,y,t)=\sin(2\pi y)\sin^2(\pi x)\sin(t);\\
 	& u_2(x,y,t)=-\sin(2\pi x)\sin^2(\pi y)\sin(t);\\
 	& p(x,y,t)=\cos(\pi x)\sin(\pi y)\sin(t).
 \end{align*}
 We set $\nu=1$ in \eqref{NS1}, and use the Legendre-Galerkin  method with $Nx=Ny=32$ modes in space so that the spatial discretization error is negligible compared with the time discretization error.
 In Figure \ref{fig:convergence}, we  plot the  convergence rate of the $L^2$ error for the  velocity $error_{\bm u}$,  the $L^2$ error for the pressure $error_p$ and the value of $\|\nabla \cdot \bm u\|$ at $T=1$  by using the $k$-th ($k=2,3,4$)  order schemes \eqref{scheme_NS} with  $\beta_2=3,\,\beta_3=6,\beta_4=9$. We   observe  that the expected convergence rates are achieved  in all test cases.

 \begin{figure}[!htbp]
 	\centering
 	\subfigure{ \includegraphics[scale=.35]{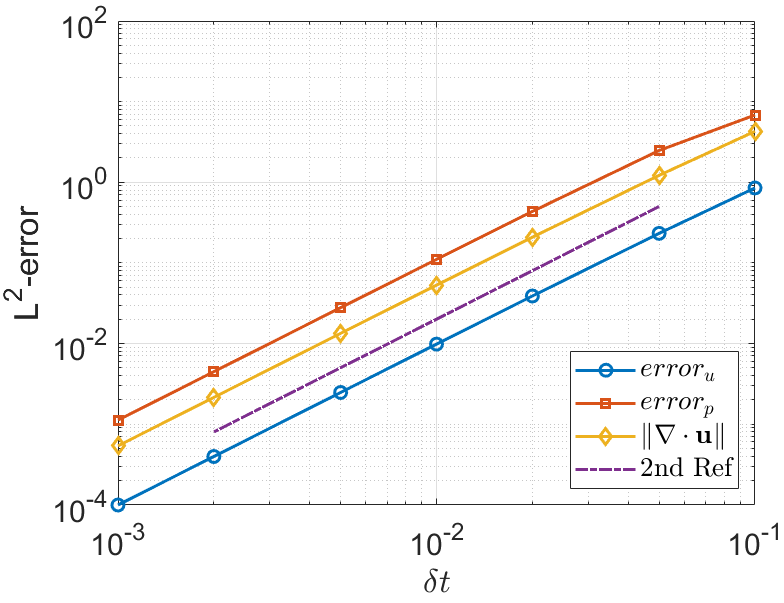}}
 	\subfigure{ \includegraphics[scale=.35]{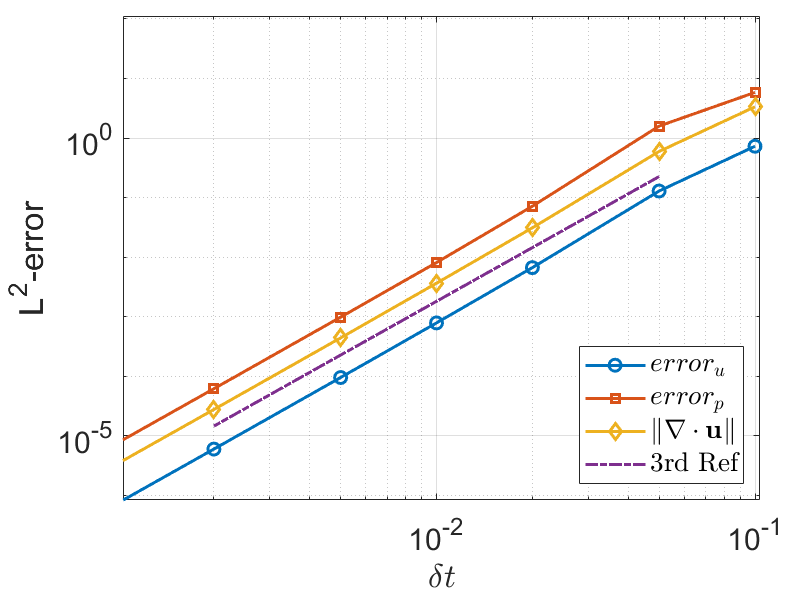}}
 	\subfigure{ \includegraphics[scale=.35]{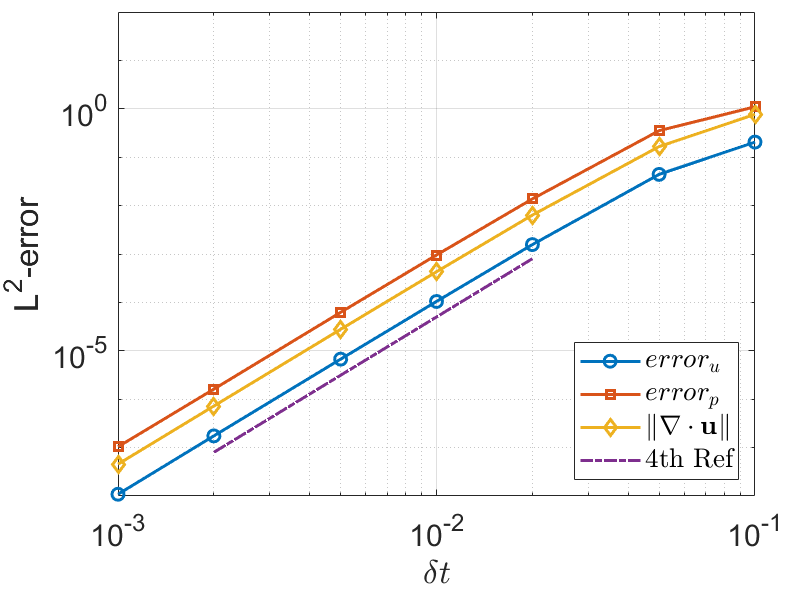}}
 	\caption{ Convergence test for the general BDF type methods. From left to right: second order, third order and fourth order schemes with  $\beta_2=3,\,\beta_3=6,\beta_4=9$. }\label{fig:convergence}
 \end{figure}

\subsection{Concluding remarks}
We considered in this paper the construction and analysis of semi-discrete higher-order consistent splitting schemes for the Navier-stokes equations.
 We constructed  schemes  based on the Taylor expansion at $t^{n+\beta}$ with $\beta\ge 1$ being a free parameter. Then, by using the multipliers identified in \cite{HS2024} and a delicate splitting of the viscous term, we showed that by choosing {\color{black} $\beta=3, \,6,\,9$} respectively for the second-, third- and fourth-order schemes, these schemes  are unconditionally stable in the absence of nonlinear terms. Then, we proved by induction  optimal global-in-time convergence rates in both 2D and 3D for the nonlinear Navier-Stokes equations. There results are the first stability and convergence results for any fully decoupled, higher-than second-order schemes for the Navier-Stokes equations.

 We provided numerical results to show that the third- and fourth-order schemes based on the usual BDF (i.e. $\beta=1$) are not unconditionally stable while the new third- and fourth-order schemes with  $\beta=\beta_k$ specified in \eqref{betak}  are unconditionally stable and lead to expected convergence rates.

Below are some problems related to this paper that deserve further investigation:
\begin{itemize}
\item We only carried out stability and error analysis for the second- to fourth-order consistent splitting schemes in this paper. It is still an open question whether these results can be extended to the fifth- and six-order consistent splitting schemes with suitable $\beta$.
\item We only considered semi-discrete (in time) schemes in this paper. It is worthwhile to construct suitable space discretizations for these consistent splitting schemes and carry out corresponding stability and error analysis. {\color{black} Note that if one uses a spectral method or $C^1$ finite-element method, it is expected that the results established in this paper can be directly extended to the fully discrete cases. However, the case with a $C^0$ finite-element method would be much more delicate as we can not directly test the scheme with $\Delta v_h$. We are currently working on a DG finite-element method to overcome this difficulty.}
\item {\color{black} A key element  for the stability analysis is Lemma 2 which requires $\Omega\in C^3$.  Our numerical results indicate that the proved stability and convergence rate  are still valid  in a square domain. However, it is not clear and beyond the scope of this paper whether the proof can be extended to polygonal domains.}
\item Since the Navier-Stokes equations are essential components of many coupled complex nonlinear systems, such as magneto-hydrodynamic equations, Navier-Stokes-Cahn-Hilliard equations, etc, it would be interesting to extend the results in this paper for Navier-Stokes equations to coupled complex nonlinear systems involving Navier-Stokes equations.
\end{itemize}

\begin{appendix}
\section{Proof of \eqref{Fk_ineq}}\label{app1}
Here, we provide the explicit telescoping forms for $\big(F_k^{\beta_k}(\bm u^{n+1}), C_k^{\beta_k}(\bm u^{n+1}) \big)$ and hence prove \eqref{Fk_ineq}:
\begin{equation}\label{F2_app}
\begin{split}
k=2,\,\beta_2=3: \quad & \big(F_2^3(\bm u^{n+1}),C_2^3(\bm u^{n+1}) \big) =\big(\frac{1}{100}\bm u^{n+1}, 4\bm u^{n+1}-3\bm u^{n} \big)\\
& =\frac{1}{100}\|\bm u^{n+1}\|^2+\frac{3}{200}\|\bm u^{n+1}\|^2-\frac{3}{200}\|\bm u^{n}\|^2+\frac{3}{200}\|\bm u^{n+1}-\bm u^n\|^2.
\end{split}
\end{equation}
\begin{equation}\label{F3_app}
\begin{split}
k=3,\,& \beta_2=6: \quad  \big(F_3^6(\bm u^{n+1}),C_3^6(\bm u^{n+1}) \big) =\big(\frac{27}{100}\bm u^{n+1}-\frac{21}{100}\bm u^{n}, 28\bm u^{n+1}-48\bm u^n+21\bm u^{n-1} \big)\\
 =&\big(\frac{27}{100}\bm u^{n+1}-\frac{21}{100}\bm u^{n}, \bm u^{n+1} \big)+\big(\frac{27}{100}\bm u^{n+1}-\frac{21}{100}\bm u^{n}, 27\bm u^{n+1}-48\bm u^n+21\bm u^{n-1} \big)\\
 =&\frac{3}{50}\|\bm u^{n+1}\|^2+\frac{21}{200}\|\bm u^{n+1}\|^2-\frac{21}{200}\|\bm u^{n}\|^2+\frac{21}{200}\|\bm u^{n+1}-\bm u^n\|^2\\
& +\frac{1}{200}\|27\bm u^{n+1}-21\bm u^n\|^2-\frac{1}{200}\|27\bm u^{n}-21\bm u^{n-1}\|^2+\frac{1}{200}\|27\bm u^{n+1}-48\bm u^n+21\bm u^{n-1}\|^2.
\end{split}
\end{equation}
\begin{equation}\label{F4_app}
\begin{split}
& k=4,\,\beta_2=9: \\
& \big(F_4^9(\bm u^{n+1}),C_4^9(\bm u^{n+1}) \big) \\
 =&\frac{2}{10^5}\big(215\bm u^{n+1}-375\bm u^{n}+165\bm u^{n-1}, 220\bm u^{n+1}-594\bm u^n+540\bm u^{n-1}-165\bm u^{n-2} \big)\\
 =&\frac{2}{10^5}\big(215\bm u^{n+1}-375\bm u^{n}+165\bm u^{n-1}, 5\bm u^{n+1}-4\bm u^n \big)\\
& +\frac{2}{10^5}\big(215\bm u^{n+1}-375\bm u^{n}+165\bm u^{n-1}, 215\bm u^{n+1}-590\bm u^n+540\bm u^{n-1}-165\bm u^{n-2} \big)\\
 =&\frac{2}{10^5}\big(210\bm u^{n+1}-375\bm u^{n}+165\bm u^{n-1}, 5\bm u^{n+1}-4\bm u^n \big)+\frac{2}{10^5}\big(5\bm u^{n+1}, 5\bm u^{n+1}-4\bm u^n \big)\\
& +\frac{1}{10^5}\big(\|215\bm u^{n+1}-375\bm u^{n}+165\bm u^{n-1} \|^2-\|215\bm u^{n}-375\bm u^{n-1}+165\bm u^{n-2} \|^2\big)\\
&+\frac{1}{10^5}\| 215\bm u^{n+1}-590\bm u^n+540\bm u^{n-1}-165\bm u^{n-2}\|^2,\\
=&\frac{2}{10^5}\big(210\bm u^{n+1}-375\bm u^{n}+165\bm u^{n-1}, 5\bm u^{n+1}-4\bm u^n \big)+\frac{1}{10^4}\|\bm u^{n+1}\|^2\\
&+\frac{2}{10^4}\big( \|\bm u^{n+1}\|^2-\|\bm u^n\|^2+\|\bm u^{n+1}-\bm u^n\|^2\big)\\
&+\frac{1}{10^5}\big(\|215\bm u^{n+1}-375\bm u^{n}+165\bm u^{n-1} \|^2-\|215\bm u^{n}-375\bm u^{n-1}+165\bm u^{n-2} \|^2\big)\\
&+\frac{1}{10^5}\| 215\bm u^{n+1}-590\bm u^n+540\bm u^{n-1}-165\bm u^{n-2}\|^2,
\end{split}
\end{equation}
and finally, for the term $\big(210\bm u^{n+1}-375\bm u^{n}+165\bm u^{n-1}, 5\bm u^{n+1}-4\bm u^n \big)$, we have
\begin{equation}
\begin{split}
&\big(210\bm u^{n+1}-375\bm u^{n}+165\bm u^{n-1}, 5\bm u^{n+1}-4\bm u^n \big)\\
&=a\|\bm u^{n+1}\|^2-a\|\bm u^{n}\|^2+\|b\bm u^{n+1}+c\bm u^n\|^2-\|b\bm u^{n}+c\bm u^{n-1}\|^2
+\|d\bm u^{n+1}+e\bm u^n+f\bm u^{n-1}\|^2.
\end{split}
\end{equation}
with
\begin{small}
\begin{equation}
 e=-\sqrt{\frac{3375}{2}},\,f=\frac{-\sqrt{37.5}+\sqrt{1687.5}}{2},\,c=f,\,d=\sqrt{37.5}+f,\,b=\frac{660+2ef}{2c},\,a=1050-b^2-d^2\approx 0.2188.
\end{equation}
\end{small}
\section{Proof of Lemma \ref{DC_lemma}}\label{app2}
\end{appendix}
\begin{proof}
The proof follows the basic process as in \cite{akrivis2021energy}. We consider the case $k=2,3,4$ separately.\\
\textbf{Case I: k=2.} With $c_{2,q}$ obtained from \eqref{solve_ckq} and $d_{2,q}$ defined in \eqref{dkq} and $\beta_2=3$, $\eta_k=0.71$, the explicit form of $\tilde{C}_2^3(\zeta)$ and $\tilde{D}_2^3(\zeta)$ are given as
\begin{equation}
\tilde{C}_2^3(\zeta)=4\zeta-3,\quad \tilde{D}_2^3(\zeta)=\frac{3}{20}\zeta+\frac{13}{100},
\end{equation}
which imply $\tilde{C}_2^3(\frac{3}{4})=0$ and $ \tilde{D}_2^3(\frac{-13}{15})=0$. Hence $\tilde{C}_2^3(\zeta)$, $\tilde{D}_2^3(\zeta)$ have no common divisor and $\frac{\tilde{D}_2^3(\zeta)}{\tilde{C}_2^3(\zeta)}$ is holomorphic outside the unit disk. Moreover, we have
\begin{equation}
\lim_{|\zeta| \rightarrow \infty}\frac{\tilde{D}_2^3(\zeta)}{ \tilde{C}_2^3(\zeta)}=\frac{3}{80}>0.
\end{equation}
Therefore, it follows from the maximum principle for harmonic functions, $\text{Re}\frac{\tilde{D}_2^3(\zeta)}{\tilde{C}_2^3(\zeta)}>0,\quad \forall |\zeta|>1$ is equivalent to
\begin{equation}\label{A10}
\text{Re} \frac{\tilde{D}_2^3(\zeta)}{\tilde{C}_2^3(\zeta)}\ge 0, \quad \forall \zeta \in \mathbb{S}^1,
\end{equation}
with $\mathbb{S}^1$ is the unit circle in the complex plane and \eqref{A10} is equivalent to
\begin{equation}\label{A11}
Re[\tilde{D}_2^3(e^{i\theta})\tilde{C}_2^3(e^{-i\theta})] \ge 0, \quad \theta \in [0, 2\pi).
\end{equation}
Denote $y:=\cos(\theta)$, then \eqref{A11} is equivalent to
\begin{equation}
Re[\tilde{D}_2^3(e^{i\theta})\tilde{C}_2^3(e^{-i\theta})]=\frac{7}{100}y+\frac{21}{100} \ge 0, \quad  \forall y \in [-1, 1],
\end{equation}
which is obvious true and hence we proved Lemma \ref{DC_lemma} with $k=2$.\\
\textbf{Case II: k=3.} With $k=3$ and $\beta_3=6$, the explicit form of $\tilde{C}_3^6(\zeta)$ and $\tilde{D}_3^6(\zeta)$ are given as
\begin{equation}
\tilde{C}_3^6(\zeta)=28\zeta^2-48\zeta+21,\quad \tilde{D}_3^6(\zeta)=\frac{17}{20}\zeta^2-\frac{71}{100}\zeta+\frac{9}{100},
\end{equation}
and the zeros of $\tilde{C}_3^6(\zeta)$ are $\frac{12 \pm \sqrt{3}i}{14}$, the zeros of $\tilde{D}_3^6(\zeta)$ are $\frac{71 \pm \sqrt{1981}}{170}$, which imply $\tilde{C}_3^6(\zeta)$, $\tilde{D}_3^6(\zeta)$ have no common divisor and $|\frac{12 \pm \sqrt{3}i}{14}|<1$ implies $\frac{\tilde{D}_3^6(\zeta)}{\tilde{C}_3^6(\zeta)}$ is holomorphic outside the unit disk. Following the same process as the second order case, one can easily show $\text{Re}\frac{\tilde{D}_3^6(\zeta)}{\tilde{C}_3^6(\zeta)}>0,\quad \forall |\zeta|>1$ is equivalent to
\begin{equation}
f_3(y):=\frac{2037}{50}y^2-\frac{7991}{100}y+\frac{197}{5} \ge 0, \quad  \forall y \in [-1, 1],
\end{equation}
which is true since
\begin{equation}
\mathop{\rm {min}}\limits_{y \in [-1,1]} f_3(y)=f_3(\frac{7991}{8148})\approx 0.214874>0.
\end{equation}
\textbf{Case III: k=4.} With $k=4$ and $\beta_4=9$, the explicit form of $\tilde{C}_4^9(\zeta)$ and $\tilde{D}_4^9(\zeta)$ are given as
\begin{subequations}
\begin{align}
&\tilde{C}_4^9(\zeta)=220\zeta^3-594\zeta^2+540\zeta-165,\\
&\tilde{D}_4^9(\zeta)=\frac{1}{10^4}(87957\zeta^3-182525\zeta^2+125967\zeta-28500),
\end{align}
\end{subequations}
and the zeros of $\tilde{C}_4^9(\zeta)$ (with six decimal places) are
\begin{equation}
\zeta_{C1}=0.858473,\,\zeta_{C2}=0.920763+0.160745i,\,\zeta_{C3}=0.920763-0.160745i,
\end{equation}
and the zeros of $\tilde{D}_4^9(\zeta)$ (with six decimal places) are
\begin{equation}
\zeta_{D1}=0.517951,\,\zeta_{D2}=0.778605+0.139132i,\,\zeta_{D3}=0.778605-0.139132i,
\end{equation}
 which imply $\tilde{C}_4^9(\zeta)$, $\tilde{D}_4^9(\zeta)$ have no common divisor and $|\zeta_{Ci} |<1,\,i=1,2,3$ implies $\frac{\tilde{D}_4^9(\zeta)}{\tilde{C}_4^9(\zeta)}$ is holomorphic outside the unit disk. Following the same process as the second order case, one can easily show $\text{Re}\frac{\tilde{D}_4^9(\zeta)}{\tilde{C}_4^9(\zeta)}>0,\quad \forall |\zeta|>1$ is equivalent to
\begin{equation}\label{f4}
f_4(y):=\alpha_3y^3+\alpha_2y^2+\alpha_1y+\alpha_0 \ge 0, \quad  \forall y \in [-1, 1],
\end{equation}
with
\begin{equation}
\alpha_3=-\frac{429\times9689}{500},\,\alpha_2=\frac{9\times2716781}{1000},\,\alpha_1=-\frac{241\times62141}{625},\,\alpha_0=\frac{53\times3^{10}}{400}.
\end{equation}
\eqref{f4} is true since
\begin{equation}
\mathop{\rm {min}}\limits_{y \in [-1,1]} f_4(y)=f_4(y^*) \approx 3.000376\times 10^{-4} >0.
\end{equation}
with $y^*=\frac{-\alpha_2+\sqrt{\alpha_2^2-3\alpha_1\alpha_3}}{3\alpha_3} \approx  0.959828$.
The proof for all the cases is completed.
\end{proof}

\bibliographystyle{plain}
\bibliography{bib_NS_error,consistent_splitting}
\end{document}